\theoremstyle{definition}
\newtheorem{maintheorem}{Theorem}
\newtheorem{fappl}{Theorem} 
\newtheorem{gk}{Theorem} 
\newtheorem{sappl}{Theorem} 
\newtheorem{TAppl}{Theorem} 
\newtheorem{Unfold}{Theorem} 
\newtheorem{FoAppl}{Theorem} 
\newtheorem{theorem}{Theorem}[section]
\newtheorem{proposition}[theorem]{Proposition}
\newtheorem{corollary}[theorem]{Corollary}
\newtheorem{lemma}[theorem]{Lemma}
\newtheorem{remark}[theorem]{Remark}
\newtheorem{definition}[theorem]{Definition}
\newtheorem{example}[theorem]{Example}
\newtheorem{problem}[theorem]{Problem}
\newcommand{\CC}{\mathbb{C}}
\newcommand{\PP}{\mathbb{P}}
\newcommand{\ZZ}{\mathbb{Z}}
\newcommand{\UUb}{\mathbb{U}}
\newcommand{\NN}{\mathbb{N}}
\newcommand{\OO}{\mathcal{O}}
\newcommand{\UU}{\mathcal{U}}
\newcommand{\FF}{\mathscr{F}}
\newcommand{\LL}{\mathcal{L}}
\newcommand{\KK}{\mathcal{K}}
\newcommand{\F}{\mathcal{F}}
\newcommand{\II}{\mathscr{I}}
\newcommand{\pp}{\mathfrak{p}}
\newcommand{\aff}{\mathfrak{aff}}
\newcommand{\sing}{\mathrm{Sing}}
\newcommand{\codim}{\mathrm{codim}}
\newcommand{\ann}{\mathrm{ann}}
\newcommand{\proj}{\text{Proj}}
\newcommand{\Hom}{\text{Hom}}
\title{On the geometry of the singular locus of a codimension one foliation in $\PP^n$}
\author{Omegar Calvo-Andrade\footnotemark[1], Ariel Molinuevo\thanks{The author was fully supported by CIMAT, Mexico.}, Federico Quallbrunn\thanks{The author was fully supported by Universidad de Buenos Aires, Argentina.}}
\date{}
\def\blfootnote{\gdef\@thefnmark{}\@footnotetext}
\begin{document}

\maketitle

\bigskip

\begin{abstract}
We will work with codimension one holomorphic foliations over the complex projective space, represented by integrable forms $\omega\in H^0(\Omega^1_{\PP^n}(e))$. Our main result is that,  under suitable hypotheses, the Kupka set of the singular locus of $\omega\in H^0(\Omega^1_{\PP^3}(e))$, defined algebraically as a scheme, 
turns out to be arithmetically Cohen-Macaulay. As a consequence, we prove the connectedness of the Kupka set in $\PP^n$, and the splitting of the tangent sheaf of the foliation, provided that it is locally free. 
\end{abstract}

\blfootnote{\textup{2000} \textit{Mathematics Subject Classification}: 13D10, 14F17, 32S65}

\section{Introduction}

In this work, we consider sections $\omega\in H^0(\Omega^1_{\PP^n}(e)),\quad n\geq3$, such that its \textit{singular set}, denoted by $\sing(\omega)=\{p\in \PP^n :\  \omega(p)=0\}$ has codimension $\geq2$.
 
Such a section, represents a codimension one \textit{singular holomorphic distribution} of the complex projective space $\PP^n$, that is, at any regular point \emph{i.e.} $p\in \PP^n\setminus \sing(\omega)$, the differential 1-form $\omega(p)$ defines an hyperplane $T_{\omega}(p)=Ker(\omega(p))\subset T_{\PP^n}(p)$.

A distribution represented by $\omega\in H^0(\Omega^1_{\PP^n}(e))$ is \textit{involutive} if $\omega\wedge d\omega=0\in H^0(\Omega^3_{\PP^n}(2e))$. In this case, the Frobenius Theorem implies that $\omega$ is integrable and represents a codimension one \textit{regular holomorphic foliation} on $\PP^n\setminus \sing(\omega)$.   

We denote  respectively by 
\begin{eqnarray*}
\mathscr{D}(\PP^n,e)&=&\{\omega\in \PP(H^0(\Omega^1_{\PP^n}(e))):\  \codim(\sing(\omega))\geq 2\}  \\
\FF(\PP^n,e)&=&\{\omega\in \mathscr{D}(\PP^n,e):\  \omega\wedge d\omega=0\}
\end{eqnarray*}
the set of codimension one singular distributions  and singular foliations with normal class $e$ or degree $d=e-2$. The degree, or equivalently the normal class, is the main discrete invariant of a distribution or a foliation. These sets are not empty when $e\geq2$ or the degree $d\geq0$.

It is also known that $\mathscr{F}(\PP^n,e)$ is an algebraic subvariety of $\PP(H^0(\Omega^1_{\PP^n}(e)))$ and has several irreducible components if $n,e\geq3$.

Given $\omega\in \mathscr{D}(\PP^n,e)$ its singular set, $\sing(\omega)$, can be written as $\sing(\omega)=S_2(\omega)\cup\cdots\cup S_n(\omega)$, where $S_k(\omega)$ denotes the union of the irreducible components of codimension $k$.

A distribution may be seen in the following way. 
A section $\omega\in H^0(\Omega^1_{\PP^n}(e))$ may be regarded as a sheaf map $\omega:\OO_{\PP^n}\rightarrow\Omega^1(e)$. Taking duals, we get a cosection $\omega^\vee:(\Omega^1(e))^{\vee}\rightarrow\OO_{\PP^n}$, whose image is an ideal sheaf up to twist. After twisting by $\OO_{\PP^n}(e)$ we get $\mathscr{J}(e)=\mathcal{I}_{\sing(\omega)}(e)$. It induces the exact sequence 

\begin{equation}\label{ExactSeq}
0\rightarrow T_\omega\rightarrow T_{\PP^n}
\stackrel{\omega}{\rightarrow}\mathscr{J}(e)\rightarrow 0\qquad
T_\omega=ann(\omega)=\{X\in T_{\PP^n} :\  i_{X}\omega=0\},
\end{equation}
where $T_\omega$ is the \textit{tangent sheaf} of the distribution that $\omega$ represents. It is reflexive and in some cases, it is locally free. The quotient sheaf $\mathscr{J}(e)$, is the \textit{normal sheaf}. It is torsion free and its first Chern class is the normal class of the distribution.

The exact sequence (\ref{ExactSeq}) and its cohomology exact sequence provides the relations between the singular set as scheme and properties of the tangent sheaf that has implications on the global behaviour of the foliation.

From this point of view, the distribution is involutive if and only if, the tangent sheaf is closed under the Lie bracket operation of holomorphic vector fields $[T_\omega,T_\omega]\subset T_\omega$.    
 
These two points of view give rise to different notions of flat families, that coincides in some cases, see \cite{quallbrunn}.

\subsection{Distributions with locally free tangent sheaf}

As we have mentioned above, the tangent sheaf $T_\omega$ is reflexive but in general, it is not locally free. 
Foliations with locally free tangent sheaf, has been considered in \cite{omegar2} and \cite{fj}, from the point of view of deformations of foliations. Also in \cite{rl}, as in \cite{omegar2} and \cite{fj}, the authors obtain new irreducible components of the space of foliations coming from representations of $\aff(\CC)$, in this case as pullbacks of $\PP^2$ with weights.
In \cite{gir-pancoll} in dimension 3 and \cite{correa-etal} in any dimension, it is showed that $T_\omega$ is locally free if and only if $\sing(\omega)$ is pure of codimension 2. Even more so, the tangent sheaf splits as a sum of line bundles if and only if the singular set is \textit{arithmetically Cohen--Macaulay} (aCM).  

On the other hand, given a rank $(n-1)$ holomorphic vector bundle $E$, for a suitable integer $k\gg0$, a generic linear map $\varphi:F=E(k)\rightarrow T_{\PP^n}$ defines a codimension one distribution \cite[Appendix]{Jardim-etal}. It is an interesting question 
to determine if the set $\mathcal{H}om(F,T_{\PP^n})$ contains integrable maps.
\[
\mathscr{D}: 0\rightarrow F\stackrel{\varphi}{\rightarrow}
T_{\PP^n} \rightarrow J\rightarrow0\quad 
\mbox{ with }\quad[\varphi(F),\varphi(F)]\subset \varphi(F). 
\]      

Until now, the only integrable known examples are those where the tangent sheaf splits as a direct sum of line bundles, i. e. $T_\omega\simeq \oplus \OO_{\PP^n}(\alpha_i).$ Moreover, since the bundle $T_{\PP^n}$ is stable, we have $1\geq\alpha_i\in\ZZ$ for all $i=1,\dots,n-1.$

\

In \cite{omegar2} the following problem is stated that we will address 
in the present work. See Theorem \ref{SplitTan}, for a partial solution.

\begin{problem} If the tangent sheaf of a foliation $T_\omega$ is locally free. Is it true that $T_\omega$ splits? 
\end{problem} 

Split tangent sheaf foliations are represented by homogeneous differential 1-forms in $\CC^{n+1}$ of the form
\[
\omega=i_{R}i_{X_1}\dots i_{X_{n-1}}dz_0\wedge\cdots\wedge dz_n
\]
where $R=\sum_{i=0}^n x_i\frac{\partial }{\partial x_i}$ denotes the radial vector field on $\CC^{n+1}$ and 
$X_j$ are homogeneous vector fields in $\CC^{n+1}$ such that 
\[
[X_i,X_j]=\sum_{\ell=1}^{n-1} C^{\ell}_{ij}\cdot X_{\ell},\quad \mbox{and}\quad [X_i,R]=(1-deg(X_i))X_i,
\]
and the foliation is defined by a representation of a Lie Algebra $\mathfrak{g}$ in homogeneous vector fields, see \cite{fj}.

A second problem that we consider in this note 
is the following one, stated in \cite{cerveau}. See Corollary \ref{K-connected} for a partial solution.

\begin{problem}
Let $\omega\in\FF(3,e)$ be a foliation. Is the set $S_2(\omega)$ connected?
\end{problem}

Observe that it is true in $n\geq4$ but we can find non--integrable holomorphic distributions with locally free tangent sheaf and singular disconnected set \cite{Jardim-etal}.

\subsection{First order unfoldings}

The theory of unfoldings for integrable differential forms was developed by Tatsuo Suwa in \cite{suwa-unfoldings}.
It was originally inspired by the work of Mather and Wasserman on unfoldings of holomorphic function germs, so most of the results in \cite{suwa-unfoldings, suwa-multiform, suwa-meromorphic} are proven for germs of holomorphic integrable $1$-forms. We recall the main definitions used by Suwa on those works.  
 Let us denote as $\OO_{\CC^{n+1},p}$ and $\Omega^1_{\CC^{n+1},p}$ the analytic germs of functions and differential 1-forms around $p\in\CC^{n+1}$, respectively. If $\varpi\in \Omega^1_{\CC^{n+1},p}$ defines a foliation, the space of first order unfoldings of $\varpi$ can be parameterized as
\begin{equation*}
U_p(\varpi) = \left\{(h,\eta)\in \OO_{\CC^{n+1},p}\times\Omega^1_{\CC^{n+1},p}:\ h\ d\varpi= \varpi\wedge (\eta - dh)\right\}\big/\CC.(0,\varpi).
\end{equation*}
For a generic $\varpi$, the projection of $U_p(\varpi)$ to the first coordinate defines an ideal $I_p(\varpi)\subseteq \OO_{\CC^{n+1},p}$. This ideal gives a good algebraic structure to study $U_p(\varpi)$ and was used by Suwa to classify first order unfoldings of rational and logarithmic foliations, see \cite{suwa-meromorphic,suwa-multiform}. We refer the reader to \cite{suwa-review} for a review of several results on unfoldings of foliations, together with several applications of unfoldings to rigidity and finite determinacy of codimension $1$ foliations.

\medskip

In \cite{moli} a study of unfolding of \emph{global} foliations is, to our best knowledge, first carried out. In that work the first order unfoldings of logarithmic and rational foliations on $\PP^n$ are computed. In \cite{mmq} more general cases are studied and, for example, the unfoldings of foliations with split tangent sheaf are determined. In order to study the unfoldings of foliations on projective space some modifications are made to the objects originally defined by Suwa. Such modifications will be used by us in this work so we will now recall them.

\medskip 

For $\omega\in \FF(\PP^n,e)$, first order unfoldings can be parameterized in a way analogous to that of Suwa as
\begin{equation*}
\begin{aligned}
 &U(\omega)=\\
 &=\left\{(h,\eta)\in H^0(\PP^n,\OO_{\PP^n}(e))\times H^0(\PP^n,\Omega^1_{\PP^n}(e))\,\colon\, h\ d\omega  = \omega\wedge (\eta - dh)\right\}\big/\CC.(0,\omega).
\end{aligned}
\end{equation*}
Since $U(\omega)$ is a finite dimensional vector space there is no ideal associated to it. To remedy this shortcoming one can proceed as follows. Let $S=\CC[x_0,\ldots,x_n]$ be the ring of homogeneous coordinates in $\PP^n$ and consider $\omega$ as an affine differential form in $\CC^{n+1}$, the cone of $\PP^n$. Then we recall from \cite{moli} the $S$-module of \emph{graded projective unfoldings},
\[
\UUb(\omega) = \left\{(h,\eta)\in S\times \Omega^1_{S}: \ L_R(h)\ d\omega = L_R(\omega)\wedge(\eta - dh) \right\}\big/ S.(0,\omega),
\]
where $L_R$ is the Lie derivative with respect to the radial vector field \linebreak $R=\sum_{i=0}^nx_i\frac{\partial}{\partial x_i}$, see \cite[Definition 3.1, p.~1598]{moli}.

\smallskip

The projection of $\UUb(\omega)$ to the first coordinate defines an ideal $I(\omega)\subseteq S$ emulating the situation in the local analytic setting. We will call $I(\omega)$ the \emph{ideal of graded projective unfoldings} of $\omega$, or simply, the \emph{ideal of unfoldings of $\omega$} if no confusion can arise.

\subsection{Statement of the results}\label{results}

Let $\omega\in \FF(\PP^n,e)$ be a foliation. The singular locus of $\omega$, can be decomposed as the union of
\[
 \sing(\omega)= \overline{\KK_{set}}\cup\LL_{set},
\]
where $\KK_{set}$ is the Kupka set 
\[
\KK_{set}(\omega)=\{ p\in \sing(\omega) :\  d\omega(p)\neq0\}\subset S_2(\omega).
\]
and $\LL_{set}$ is the non-Kupka set 
\[
\LL_{set}(\omega)=\overline{Sing(\omega)\backslash \overline{\KK_{set}}}.
\]
Recall that $\LL_{set}(\omega)\supset S_k(\omega),\ k\geq3$, but in some cases it also has codimension two components.

We need to introduce some refinements of these sets and consider them as schemes $\KK$ and $\LL$, see Definitions \ref{Kscheme} and \ref{NKscheme}, we refer the reader to \cite{mmq} for a complete exposition on the subject. 

\

%
%

Now we are able to state our main result.

\begin{maintheorem}\label{teo4-intro}
Let $\omega\in\FF(\PP^3,e)$ such that $Sing(\omega)$ is reduced and $\KK\cap\LL =\emptyset$, then $\KK$ is \emph{arithmetically Cohen-Macaulay} (aCM).
\end{maintheorem}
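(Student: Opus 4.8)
The plan is to reduce the statement to a single cohomological vanishing and then to extract that vanishing from the local product structure at Kupka points together with integrability. Since $\sing(\omega)$ is reduced and $\KK\cap\LL=\emptyset$, the Kupka scheme $\KK$ is a union of connected components of $\sing(\omega)$ that is disjoint from all non-Kupka points; by the Kupka normal form $\omega=\pi^{*}\eta$ (with $\pi$ a submersion and $d\eta(0)\neq 0$) the reducedness of the transverse singularity forces $\KK$ to be a smooth curve, in particular a local complete intersection of pure codimension $2$ in $\PP^{3}$. For such a curve, being aCM is equivalent to $H^{1}(\PP^{3},\II_{\KK}(k))=0$ for all $k\in\ZZ$: the saturated ideal of a pure one-dimensional, locally Cohen--Macaulay scheme has depth $\geq 1$, so the only obstruction to $\mathrm{depth}\,(S/I_{\KK})=2$ is $\bigoplus_{k}H^{1}(\PP^{3},\II_{\KK}(k))$. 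Thus the whole problem becomes the vanishing of this intermediate cohomology.

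Next I would pin down the normal bundle. Restricting $d\omega\in H^{0}(\PP^{3},\Omega^{2}_{\PP^{3}}(e))$ to $\KK$ and using that at a Kupka point $d\omega$ is a nonzero decomposable $2$-form spanning exactly the conormal directions $N^{\vee}_{\KK}=\II_{\KK}/\II_{\KK}^{2}\subset\Omega^{1}_{\PP^{3}}|_{\KK}$, one obtains a nowhere-vanishing section of $\bigl(\det N^{\vee}_{\KK}\bigr)(e)$, whence $\det N_{\KK}\cong\OO_{\KK}(e)$; the complete intersection case $\KK=\{F=G=0\}$ with $\deg F+\deg G=e$ is the model computation. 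Because $\det N_{\KK}$ is the restriction of $\OO_{\PP^{3}}(e)$ and $H^{2}(\PP^{3},\OO(e-4))=0$, the Hartshorne--Serre correspondence realizes $\KK$ as the zero scheme of a section of a rank-$2$ bundle $\mathcal{E}$ on $\PP^{3}$ with $\det\mathcal{E}=\OO(e)$, giving
\[
0\to\OO_{\PP^{3}}\to\mathcal{E}\to\II_{\KK}(e)\to 0 .
\]
Writing $H^{i}_{*}(\mathcal{F})=\bigoplus_{k}H^{i}(\PP^{3},\mathcal{F}(k))$ and using $H^{1}_{*}(\OO_{\PP^{3}})=H^{2}_{*}(\OO_{\PP^{3}})=0$, this yields $H^{1}(\PP^{3},\II_{\KK}(k))\cong H^{1}(\PP^{3},\mathcal{E}(k-e))$ for all $k$, so $\KK$ is aCM precisely when $\mathcal{E}$ has no intermediate cohomology. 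Since $\det\mathcal{E}=\OO(e)$ gives $\mathcal{E}^{\vee}\cong\mathcal{E}(-e)$, Serre duality converts $H^{2}_{*}(\mathcal{E})$ into $H^{1}_{*}(\mathcal{E})$, and by Horrocks' criterion the vanishing of $H^{1}_{*}(\mathcal{E})$ is equivalent to $\mathcal{E}$ splitting as a sum of line bundles.

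The heart of the matter, and the step I expect to be the main obstacle, is therefore to prove that integrability forces $H^{1}_{*}(\mathcal{E})=0$ --- equivalently, that the Kupka bundle splits and $\KK$ admits a length-one (Hilbert--Burch) resolution, so that $S/I_{\KK}$ has projective dimension $2$ and is Cohen--Macaulay by Auslander--Buchsbaum. This is where $\omega\wedge d\omega=0$ must be exploited globally: the local first integrals coming from $\omega=\pi^{*}\eta$ should be assembled, through the graded module of unfoldings $\UUb(\omega)$ and its ideal $I(\omega)$, into global generators whose syzygies are free. Concretely, I would first show that under the hypotheses $I(\omega)$ cuts out $\KK$ scheme-theoretically (any $h\in I(\omega)$ must vanish on $\KK$, since there $d\omega\neq 0=\omega$, and the defining relation $L_{R}(h)\,d\omega=L_{R}(\omega)\wedge(\eta-dh)$ is solvable along the Kupka locus by the local product), and then use the structure of $\UUb(\omega)$ --- together with the local freeness of $T_{\omega}$ near $\KK$ furnished by (\ref{ExactSeq}) and purity of codimension --- to produce the required short free resolution of $S/I_{\KK}$. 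Controlling the depth of the module of unfoldings, i.e.\ ruling out higher syzygies globally, is the delicate point; the reducedness of $\sing(\omega)$ and the separation $\KK\cap\LL=\emptyset$ are exactly what make $I(\omega)$ a perfect codimension-$2$ ideal and hence $\KK$ aCM.
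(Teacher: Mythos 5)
Your opening reduction---$\dim\KK=1$, so aCM is equivalent to $H^1_*(\II_{\KK})=0$---is correct, and it is the same criterion the paper uses (equation (\ref{hart-acm})). But the geometric machinery you build on top of it rests on a false premise: that the hypotheses force $\KK$ to be a smooth curve, hence lci and subcanonical. This conflates ``$\KK\cap\LL=\emptyset$'' with ``every point of $\KK$ is a Kupka point''. The scheme $\KK$ is supported on the \emph{closure} of $\KK_{set}$, and that closure may contain points where $d\omega$ vanishes (the generalized Kupka points of \cite{omegar2}, discussed in the paper before Theorem \ref{gk-sing}); such a point need not lie in $\LL$, since it sits on components along which $d\omega\not\equiv0$, so neither hypothesis excludes it. At such a point the local product structure---your only source for smoothness, for the lci property, and for $\det N_{\KK}\cong\OO_{\KK}(e)$---is simply absent. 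Concretely, let $\omega=\pi^{*}\omega_2$ be the pullback, under a linear projection $\pi:\PP^3\dashrightarrow\PP^2$, of the degree-one logarithmic foliation $\omega_2$ with $J(\omega_2)=(x_1x_2,x_0x_2,x_0x_1)$ and generic residues. Then $J$ is radical, $\LL=\emptyset$, and $\KK$ is the union of the three non-coplanar lines $\{x_i=x_j=0\}$, $0\leq i<j\leq 2$, through the center of projection $(0:0:0:1)$: it is reduced and aCM (a cone over three points of $\PP^2$), but it is singular and not lci at the vertex, which is a non-Kupka point of $\KK$, and it is not a complete intersection. So the Hartshorne--Serre construction of your bundle $\mathcal{E}$ is unavailable; worse, your chain of equivalences proves too much, since via Horrocks it would force $\KK$ to be a complete intersection, which is false in this example and which the paper obtains only under the genuinely stronger hypothesis $\KK=\KK_{set}$ (Theorem \ref{complete-intersection}).

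Independently of this, the step you yourself isolate as ``the heart of the matter''---that integrability forces $H^1_*(\mathcal{E})=0$, equivalently a Hilbert--Burch resolution of $S/I_{\KK}$---is never proved: you describe what you ``would'' do with $\UUb(\omega)$ and $I(\omega)$, but give no argument. This is exactly where all of the paper's work lies, and, notably, the paper's route needs no geometry of $\KK$ at all. Proposition \ref{propI} shows the unfolding ideal $I$ is saturated (a \v{C}ech argument using the Koszul vanishing available because $\codim\sing(\omega)\geq2$); Theorem \ref{teoI} shows $H^1_*(\II)=0$ for \emph{every} integrable $\omega$ on $\PP^n$, $n\geq 3$, via the auxiliary module $D(\omega)$ of vector fields annihilating $d\omega$ and a duality argument; and Theorem \ref{technical1}, quoted from \cite{mmq}, gives $I=K$ under precisely the theorem's hypotheses, after which (\ref{hart-acm}) concludes. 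Note also that your deferred claim that ``$I(\omega)$ cuts out $\KK$ scheme-theoretically'' \emph{is} this last identification $I=K$, so it too is assumed rather than proved. In sum: the cohomological criterion is right, but the reduction through Serre and Horrocks fails under the stated hypotheses, and the essential vanishing is missing entirely.
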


%
%
%

As a first application of our main result, we get the Theorem \ref{SplitTan} and Theorem \ref{gk-sing}, that gives a criterion for $T_\omega$ to be locally free and a direct sum of line bundles (for short split).

 \

Then, in Theorem \ref{K-connected}, we show that $\KK$ is connected.

\

We also apply our results in the case of foliations with $\KK_{set}$ compact and provide an algebraic proof of the fact that those foliations have a meromorphic first integral, see Theorem \ref{complete-intersection}.

\

Finally, in Section \ref{aci} we state two criterions for integrability of a differential form $\omega\in H^0(\Omega^1_{\PP^n}(e))$: Theorem \ref{teo5}, in terms of the ideals of the singular locus and the unfolding ideal of $\omega$; and then Theorem \ref{teo6}, which states that if $\sing(\omega)$ is smooth and of codimension 2 then $\omega$ is a degree 0 rational foliation or it is not integrable.

\section{Preliminaries}

\subsection{Integrable $1$-forms in $\PP^n$}

Let us first recall some results and definitions from \cite{mmq} that we are going to use, specifically Theorem \ref{teo1} and Theorem \ref{technical1} below.

From Theorem \ref{teo1} we have the equality of the radicals of the Kupka ideal $K(\omega)$ and the unfoldings ideal $I(\omega)$, see \cite[Theorem 4.12, p.~13]{mmq}. From Theorem \ref{technical1} we get the equality of the ideal $K(\omega)$ and of the ideal $I(\omega)$, see \cite[Corollary 4.15, p.~14]{mmq}.

We would like to state here that we will always consider $\PP^n$ with $n\geq 3$ and that, unless we mentioned it, all cohomologies are going to be on $\PP^n$.

\bigskip

We consider a section $\omega\in H^0(\PP^n,\Omega^1_{\PP^n}(e))$ as a  
1--form in $\CC^{n+1}$ 
\[
\omega=A_0(\mathbf{x})dx_0+\cdots + A_n(\mathbf{x})dx_n\quad \mathbf{x}=(x_0,\dots,x_n)
\] 
where $A_0,\dots A_n$ are homogeneous of degree $e-1$ and satisfying 
\[
i_{R}\omega=\sum_{j=0}^n x_j\cdot A_j(\mathbf{x})=0,\quad R=\sum_{j=0}^n x_j\frac{\partial}{\partial x_j}\ .
\]

\begin{definition} We define the graded ideals of $S$ associated to $\omega\in H^0(\Omega^1_{\PP^n}(e))$ as
\begin{align*}
I(\omega) &:= \left\{ h\in S:\  h\ d\omega = \omega\wedge\eta\text{ for some } \eta\in\Omega^1_S\right\}\\
J(\omega) &:= \left\{ i_X(\omega)\in S:\ X\in T_S \right\}.
\end{align*}
$I(\omega)$ and $J(\omega)$ are the ideal of \emph{first order unfoldings} and the ideal of \emph{trivial first order unfoldings}. We will also denote them $I=I(\omega)$ and $J=J(\omega)$ if no confusion arises.
\end{definition}

\begin{remark}\label{1notinI} 
 Note that by contracting with the vector fields $\frac{\partial}{\partial x_i}$, for $i=0,\ldots,n$,  we get that $J(\omega)$ defines the singular locus of $\omega$.

 Also, notice that, when $\omega$ is integrable, by contracting the integrability condition by a vector field $X$, one can see that $J(\omega)\subset I(\omega)$. This implies that the variety defined by the ideal $I(\omega)$ has codimension $\geq 2$.
 \end{remark}

Now, we introduce the Kupka and the non--Kupka schemes.

\begin{definition}\label{Kscheme} For $\omega\in\FF(\PP^n,e)$, we define the \emph{Kupka scheme} $\KK(\omega)$ as the scheme theoretic support of $d\omega$ at $\Omega^2_{S}\otimes_S S\big/J(\omega)$. Then, $\KK(\omega)=\proj(S/K(\omega))$ where $K(\omega)$ is the homogeneous ideal defined as
\[
K(\omega)=\ann(\overline{d\omega})+J(\omega)\subseteq S,\quad \overline{d\omega}\in \Omega^2_{S}\otimes_S S\big/J(\omega).
\]
We will denote $\KK=\KK(\omega)$ and $K=K(\omega)$ if no confusion arises.
\end{definition}

One could also define $K(\omega)$ as $K(\omega)=(J(\omega)\cdot \Omega^2_S:d\omega)$. Then, given that $\Omega^2_S$ is free, we can also write
\begin{equation}\label{Kbis}
K(\omega)=(J(\omega):\II(d\omega)).
\end{equation}
where we are denoting as $\II(d\omega)$ to the ideal generated by the polynomial coefficients of $d\omega$.

In the case of two ideals $J,K\subseteq S$, we define the \emph{saturation} of $J$ with respect to $K$ as
\[
\left(J:K^\infty\right) := \bigcup_{d\geq 1} \big(J:K^d\big).
\]

\begin{definition}\label{NKscheme} For $\omega\in\FF(\PP^n,e)$, we define the \emph{non-Kupka scheme} $\LL(\omega)$ as the projective scheme $\proj(S/L(\omega))$, where $L(\omega)$ is the homogeneous ideal defined by
\[
L(\omega)=(J(\omega):K(\omega)^{\infty}).
\]
We will write $\LL=\LL(\omega)$ and $L=L(\omega)$ if no confusion arises.
\end{definition}

Let $\mathfrak{p}$ be a point in $\PP^n$, \emph{i.e.}, an homogeneous prime ideal in $S$ different from the \emph{irrelevant ideal} $(x_0.\ldots,x_n)$.

\medskip

\begin{definition} We say that $\mathfrak{p}\in\PP^n$ is a \emph{division point of $\omega$} if $1\in I(\omega)_\mathfrak{p}$.
\end{definition}
Notice that the definition of division point is simply saying that, for such a point we have that, locally around $\mathfrak{p}$ there is a $1$-form $\eta\in \Omega^1_{\PP^n,\mathfrak{p}}$ such that $d\omega_\mathfrak{p}=\omega_\mathfrak{p}\wedge\eta$, so the germ of $\omega$ on $\mathfrak{p}$ ''divides" the germ of $d\omega$, hence the name. 

\bigskip

We now define a subset of the moduli space of foliations. It is the subset of foliations in which the ideal $I(\omega)$ has a well-behaved relation with the ideal $K(\omega)$. It is worth mentioning that all the foliations that we know of belong to this subset. 

\begin{definition}\label{generic} We define the set $\UU\subseteq\FF(\PP^n,e)$ as
\[
\UU = \left\{\omega\in\FF(\PP^n,e): \ \forall \mathfrak{p}\not\in\KK(\omega),\,\mathfrak{p}\text{ is a division point of }\omega\right\}.
\]
\end{definition}

\begin{theorem}\label{teo1}
Let $\omega\in\mathcal{U}\subseteq\FF(\PP^n,e)$. Then,
\[
\sqrt{I}=\sqrt{K}.
\]
Even more so, if $\sqrt{I}=\sqrt{K}$ then $\omega\in\UU$.
\end{theorem}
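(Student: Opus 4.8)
The plan is to prove the unconditional ideal inclusion $I\subseteq K$, which already yields $\sqrt{I}\subseteq\sqrt{K}$, and then to observe that the reverse inclusion of radicals is, via the projective Nullstellensatz, a verbatim restatement of the membership condition $\omega\in\UU$. Both assertions of the theorem then follow formally, so the work splits into one genuinely algebraic step and one bookkeeping step.

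For the inclusion, I would take $h\in I$, so that $h\,d\omega=\omega\wedge\eta$ for some $\eta\in\Omega^1_S$. Writing $\omega=\sum_i A_i\,dx_i$ and $\eta=\sum_j B_j\,dx_j$, the coefficients of $\omega\wedge\eta$ are the expressions $A_iB_j-A_jB_i$, which lie in the ideal $(A_0,\dots,A_n)=J$, since by contracting $\omega$ with the fields $\partial/\partial x_k$ one has $J=\II(\omega)$. Hence every coefficient of $h\,d\omega$ belongs to $J$; that is, $h\cdot\II(d\omega)\subseteq J$, which by (\ref{Kbis}) means precisely $h\in(J:\II(d\omega))=K$. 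No use of integrability is needed here, and this already gives $\sqrt{I}\subseteq\sqrt{K}$.

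It then remains to identify $\omega\in\UU$ with $\sqrt{K}\subseteq\sqrt{I}$. Localizing at a homogeneous prime $\mathfrak{p}\neq(x_0,\dots,x_n)$, the condition $\mathfrak{p}\notin V(I)$ is equivalent to $I\not\subseteq\mathfrak{p}$, i.e.\ to $1\in I_{\mathfrak{p}}$, which is exactly the definition of $\mathfrak{p}$ being a division point; likewise $\mathfrak{p}\notin V(K)$ is equivalent to $1\in K_{\mathfrak{p}}$, i.e.\ (using that colons commute with localization for the finitely generated $\II(d\omega)$) to $\II(d\omega)_{\mathfrak{p}}\subseteq J_{\mathfrak{p}}$, which is $\mathfrak{p}\notin\KK(\omega)$. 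Thus the defining property of $\UU$ — that every $\mathfrak{p}\notin\KK(\omega)$ be a division point — is, by contraposition, exactly the set-theoretic inclusion $V(I)\subseteq V(K)$, equivalently $\sqrt{K}\subseteq\sqrt{I}$. Combining with the previous paragraph: if $\omega\in\UU$ then $\sqrt{I}=\sqrt{K}$, and conversely $\sqrt{I}=\sqrt{K}$ forces $\sqrt{K}\subseteq\sqrt{I}$ and hence $\omega\in\UU$.

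The genuine content lies entirely in the one-line inclusion $I\subseteq K$; the remaining difficulty is bookkeeping rather than geometry. One must check that the local conditions are faithfully captured by the colon ideal $(J:\II(d\omega))$ — for which I would lean on the freeness of $\Omega^2_S$ and the identity (\ref{Kbis}) — and that the irrelevant ideal does not spoil the Nullstellensatz comparison. The latter is harmless because $1\notin I$ and $1\notin K$ (as $\codim V(J)\geq2$ and, for $d\omega\neq0$, $\II(d\omega)\not\subseteq J$ globally), so $(x_0,\dots,x_n)$ lies in both $V(I)$ and $V(K)$ and does not affect the inclusion of the corresponding varieties. The only case to keep aside is $d\omega=0$, where $\omega$ is closed and both ideals degenerate trivially.
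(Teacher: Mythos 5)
Your proposal is essentially correct, and one should note at the outset that the paper contains no internal proof of Theorem \ref{teo1}: it is quoted from \cite[Theorem 4.12]{mmq}, so the comparison is really with that reference rather than with an argument in the text. Your two main steps are sound and are the natural ones: the coefficient computation showing $I\subseteq K$ unconditionally (the coefficients of $\omega\wedge\eta$ are the minors $A_iB_j-A_jB_i$, which lie in $J=(A_0,\dots,A_n)$, hence $h\cdot\II(d\omega)\subseteq J$, i.e.\ $h\in (J:\II(d\omega))=K$ by (\ref{Kbis})), and the localization/contraposition argument identifying the defining condition of $\UU$ with the inclusion $V(I)\subseteq V(K)$ of relevant homogeneous primes.

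The one place where your write-up has a real loose end is the passage from $V(I)\subseteq V(K)$ to $\sqrt{K}\subseteq\sqrt{I}$, i.e.\ the irrelevant-ideal bookkeeping. What is needed there is that $K$ is a proper ideal, so that $\sqrt{K}$ is contained in $(x_0,\dots,x_n)$, the one homogeneous prime containing $I$ which the condition defining $\UU$ does not see. You assert that $\II(d\omega)\not\subseteq J$ whenever $d\omega\neq 0$, but this implication is not formal: for arbitrary ideals, a nonzero ideal can perfectly well be contained in another. The correct justification is by grading: the coefficients of $d\omega$ are homogeneous of degree $e-2$, whereas every element of $J$ has all of its graded components of degree $\geq e-1$, so $\II(d\omega)\subseteq J$ would force $d\omega=0$; and $d\omega=0$ is impossible because the Euler relation gives $i_R\,d\omega = L_R\omega - d\,i_R\omega = e\,\omega\neq 0$. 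This same observation disposes of your final caveat: the case $d\omega=0$ that you propose to ``keep aside'' never occurs for $0\neq\omega\in H^0(\Omega^1_{\PP^n}(e))$, so it must not be left open. Likewise, $1\notin I$ does not follow from $\codim V(J)\geq 2$ as you claim, but it does follow at once from $I\subseteq K\subsetneq S$, which you already have. With these substitutions your argument is complete and correct.
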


\begin{theorem}\label{technical1}
Let $\omega\in\FF(\PP^n,e)$ be such that $J=\sqrt{J}$ and $\KK\cap \LL= \emptyset$. Then
$I = K$.
\end{theorem}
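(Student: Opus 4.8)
The plan is to prove the two inclusions $I\subseteq K$ and $K\subseteq I$ separately; the first is formal and the second carries all the content. For the first I would rewrite both ideals in terms of the $2$-form $d\omega$. Directly from the definitions, $h\in I$ means $h\,d\omega\in\omega\wedge\Omega^1_S$, while by the reformulation $K=(J\cdot\Omega^2_S:d\omega)$ recorded after Definition \ref{Kscheme} (equivalently by (\ref{Kbis})), $h\in K$ means $h\,d\omega\in J\cdot\Omega^2_S$. Writing $\omega=\sum_i A_i\,dx_i$, so that $J=(A_0,\dots,A_n)$, every coefficient of a form $\omega\wedge\eta$ has the shape $A_ib_j-A_jb_i\in J$; hence $\omega\wedge\Omega^1_S\subseteq J\cdot\Omega^2_S$ and therefore $I\subseteq K$. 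I would also record that $J\subseteq I\subseteq K$, the left inclusion coming from integrability as in Remark \ref{1notinI}.

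Next I would extract two structural facts from the hypotheses. First, since $J=\sqrt J$ and $K=(J:\II(d\omega))$ is a colon of a radical ideal, $K$ is itself radical: if $h^m\,\II(d\omega)\subseteq J$ then $(ha)^m=h^m a^m=(h^m a)a^{m-1}\in J$ for every coefficient $a$ of $d\omega$, whence $ha\in J$ and $h\in K$. Second, from $J\subseteq K$, $L=(J:K^\infty)\supseteq J$ and $J$ radical one gets $J=K\cap L$ (if $h\in K\cap L$ then $h^{d+1}\in hK^d\subseteq J$ for suitable $d$, so $h\in J$), while $\KK\cap\LL=\emptyset$ says that $V(K)$ and $V(L)$ are disjoint in $\PP^n$. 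Thus $\sing(\omega)$ splits as the disjoint reduced union of the Kupka locus $V(K)$ and the non-Kupka locus $V(L)$.

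For the reverse inclusion I would argue locally and combine it with Theorem \ref{teo1}. First I check that $\omega\in\UU$: a prime $\pp\not\supseteq K$ is either a regular point of $\omega$ — where the nonvanishing integrable germ $\omega_\pp$ satisfies $d\omega_\pp=\omega_\pp\wedge\eta$ by Frobenius, so that $1\in I_\pp$ — or a point of the non-Kupka locus $V(L)$, where the separation $\KK\cap\LL=\emptyset$ again forces $\omega$ to be a division point. Hence every $\pp\notin\KK$ is a division point, $\omega\in\UU$, and Theorem \ref{teo1} yields $\sqrt I=\sqrt K=K$. Consequently $V(I)=V(K)$, and since $I\subseteq K$ with $K$ radical, every associated prime of the finite module $K/I$ contains $K$; so it suffices to prove $I_\pp=K_\pp$ for each $\pp\supseteq K$. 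For a minimal such prime — a codimension-two component of the Kupka locus — I would pass to the local normal form at a Kupka point, where $d\omega$ is a nonzero decomposable $2$-form transverse to the leaf and a direct computation gives $I_\pp=K_\pp$. Because $K$ is radical and $\KK$, $\LL$ are disjoint, no embedded prime can occur for $K/I$; hence $K/I$ has no associated primes, $K/I=0$, and $I=K$.

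The main obstacle is this last, local, step: upgrading the radical equality $\sqrt I=\sqrt K$ to the scheme-theoretic equality $I=K$. Concretely, one must show that the annihilator of the class of $d\omega$ in the middle homology of the Koszul-type complex $\Omega^1_S\xrightarrow{\omega\wedge-}\Omega^2_S\xrightarrow{\omega\wedge-}\Omega^3_S$ (which is exactly $I$, by the first paragraph) agrees with the naive annihilator $(J:\II(d\omega))$ computed modulo $J$; these differ precisely by the failure of the complex to be exact along $\sing(\omega)$. Controlling that failure is where reducedness of $J$ (no embedded components to thicken $K$) and the separation $\KK\cap\LL=\emptyset$ (so the Kupka computation is not contaminated by the non-Kupka locus) are indispensable.
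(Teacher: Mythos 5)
First, a framing remark: the paper does not prove Theorem \ref{technical1} at all --- it imports it from \cite[Corollary 4.15, p.~14]{mmq}. So your proposal can only be judged against what a complete proof must contain, not against an argument in the text. Your formal steps are correct: $I\subseteq K$ via $\omega\wedge\Omega^1_S\subseteq J\cdot\Omega^2_S$, the radicality of $K=(J:\II(d\omega))$, the identity $J=K\cap L$, and the reduction to checking $I_\pp=K_\pp$ prime by prime. The fatal gap is at the primes $\pp\supseteq L$, $\pp\not\supseteq K$: the sentence ``the separation $\KK\cap\LL=\emptyset$ again forces $\omega$ to be a division point'' is not an argument, and it is exactly the mathematical content of the theorem. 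At such a $\pp$ one has $K_\pp=S_\pp$, so the required equality $I_\pp=K_\pp$ is literally the statement $1\in I_\pp$, i.e.\ $d\omega=\omega\wedge\eta$ near $\pp$. What the hypotheses give you there is only that the coefficients of $d\omega$ vanish on the (reduced) local singular locus, hence $\II(d\omega)_\pp\subseteq J_\pp$ by $J=\sqrt J$ --- i.e.\ $1\in K_\pp$, which you already knew. Passing from $d\omega\in J_\pp\Omega^2_\pp$ to $d\omega\in\omega\wedge\Omega^1_{S_\pp}$ is precisely the difference between $K$ and $I$ that the theorem quantifies, so asserting it at the non-Kupka points is circular. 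Genuine input is needed: on components of codimension $\geq 3$ division follows from Malgrange's theorem \cite{m-f1} (there $\omega=f\,dg$ with $f$ a unit, so $d\omega=\omega\wedge(-df/f)$), while codimension-two components of $\LL$ --- which the hypotheses do not obviously exclude --- require a further argument; this is where the actual work behind \cite[Corollary 4.15]{mmq} lies. Your closing paragraph concedes this (``the main obstacle''), which confirms the proof is incomplete. Note also that regular points are division points for the trivial reason that $J\subseteq I$ and $\pp\not\supseteq J$; no Frobenius is needed, so the entire weight of ``$\omega\in\UU$'' rests on the unproven non-Kupka case.

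The other half of your localization, at primes $\pp\supseteq K$, also has unjustified steps, though these are repairable. You verify $I_\pp=K_\pp$ only at the \emph{minimal} primes of $K$ (via the Kupka normal form) and then claim that radicality of $K$ and disjointness exclude embedded associated primes of $K/I$; they do not --- $\mathrm{Ass}(K/I)$ is not controlled by $\mathrm{Ass}(S/K)$ --- and moreover the irrelevant maximal ideal $\mathfrak{m}$ could a priori be associated to $K/I$, which you never exclude (this is where saturation of $I$, Proposition \ref{propI}, must enter). Both problems vanish, and the normal-form computation becomes unnecessary, if you use the sandwich already at your disposal: for any homogeneous prime $\pp\supseteq K$ with $\pp\neq\mathfrak{m}$, the hypothesis $\KK\cap\LL=\emptyset$ gives $\pp\not\supseteq L$, hence $J_\pp=(K\cap L)_\pp=K_\pp$, and then $K_\pp=J_\pp\subseteq I_\pp\subseteq K_\pp$ forces equality; consequently $K/I$ is supported at $\mathfrak{m}$ and Proposition \ref{propI} yields $K\subseteq I$. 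This cleanly disposes of everything except the gap of the previous paragraph, which remains the genuine missing piece.
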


\subsection{Arithmetically Cohen-Macaulay subschemes}

Arithmetically Cohen-Macaulay (aCM for short) subschemes of projective spaces $Y\subseteq \PP^n$ are those whose \emph{homogeneous coordinate ring} $\Gamma_*(\OO_Y)=\oplus_\ell H^0(\OO_Y(\ell))$ is Cohen-Macaulay. Notice that this definition depends on the immersion of $Y$ on $\PP^n$. So, although it implies that the local rings $\OO_{Y,p}$ are Cohen-Macaulay for each $p\in Y$, it is not an inherent property of $Y$ but rather a property about the form $Y$ is immersed into $\PP^n$.

\medskip
The theory of deformation and moduli of aCM subschemes has been widely studied, the first results in this subject going back to Hilbert, we refer to \cite[Chapter 8]{hart-def} for a review on the story of this subject.
An important result is the main theorem of \cite{elling} stating that aCM subschemes of codimension $2$ form an open subset of the Hilbert scheme. This result will be useful for us to study the existence of foliations on $\PP^3$ with a given curve as its singular set. 

\medskip
Finally, we would like to state the following property relative to aCM varieties: from \cite[Proposition 8.6, p.~63]{hart-def} we know that a subvariety $Y\subset \PP^n$ is aCM if and only if 
\begin{equation}\label{hart-acm}
H^i_*(\mathcal{I}_Y)=0, \text{ for } 1\leq i \leq dim (Y),
\end{equation}
where we are using the notation $H^i_*(\F) := \bigoplus_{\ell\in\mathbb{Z}} H^i(\PP^n,\F(\ell))$ and we are denoting as $\mathcal{I}_Y$ the sheaf of ideals associated to the variety $Y$.

\section{Main Theorem}

Along this section we will give our main result, which is that $\KK$ is aCM under suitable hypotheses. Before doing that, we will prove, first that the unfolding ideal $I(\omega)$ is saturated and then, that it is always aCM.

We recall the reader that we will always consider $\PP^n$ with $n\geq 3$ and that, unless we mentioned it, all cohomologies are going to be on $\PP^n$.

\medskip

\begin{proposition}\label{propI}
Let $\omega\in\FF(\PP^n,e)$, then the first order unfolding ideal $I(\omega)$ is saturated.
\end{proposition}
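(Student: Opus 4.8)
The plan is to reinterpret saturation cohomologically and then realize $S/I(\omega)$ as a submodule of a module of sufficiently large depth. Recall that an ideal $I\subseteq S$ is saturated with respect to the irrelevant ideal $\mathfrak m=(x_0,\dots,x_n)$ precisely when $H^0_{\mathfrak m}(S/I)=0$, since $H^0_{\mathfrak m}(S/I)=I^{\mathrm{sat}}/I$. So it suffices to produce a graded $S$-module $C$ with $\operatorname{depth}_{\mathfrak m}C\geq 1$ together with an embedding $S/I(\omega)\hookrightarrow C$; left-exactness of $H^0_{\mathfrak m}$ then forces $H^0_{\mathfrak m}(S/I)=0$.

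The natural candidate comes from the very definition of $I(\omega)$. Wedging with $\omega$ gives an $S$-linear map $\omega\wedge(-)\colon\Omega^1_S\to\Omega^2_S$, and by definition $h\in I(\omega)$ iff $h\,d\omega\in\omega\wedge\Omega^1_S$. Hence, writing $C:=\Omega^2_S/(\omega\wedge\Omega^1_S)$ and $\overline{d\omega}$ for the class of $d\omega$, we have $I(\omega)=\ann_S(\overline{d\omega})$ and therefore an isomorphism $S/I(\omega)\cong S\cdot\overline{d\omega}\subseteq C$ onto a cyclic submodule. The whole problem is thus reduced to bounding $\operatorname{depth}_{\mathfrak m}C$ from below.

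First I would identify the kernel of $\omega\wedge(-)$. The inclusion $S\cdot\omega\subseteq\ker$ is clear, and the reverse inclusion is the de Rham--Saito division lemma for a single $1$-form: if $\omega\wedge\beta=0$ for a $1$-form $\beta=\sum B_i\,dx_i$, the relations $A_iB_j=A_jB_i$ say the rows $(A_i)$ and $(B_i)$ are proportional over $\operatorname{Frac}(S)$, and because $\codim\sing(\omega)\geq 2$ forces $\gcd(A_0,\dots,A_n)=1$, the proportionality factor is a polynomial, so $\beta\in S\cdot\omega$. Consequently $\omega\wedge\Omega^1_S\cong\Omega^1_S/S\omega$, and the short exact sequence
\[
0\longrightarrow S(-e)\xrightarrow{\ \omega\ }\Omega^1_S\longrightarrow \Omega^1_S/S\omega\longrightarrow 0
\]
together with the standard depth estimate $\operatorname{depth}C'\geq\min\{\operatorname{depth}B,\operatorname{depth}A-1\}$ for $0\to A\to B\to C'\to 0$ gives $\operatorname{depth}_{\mathfrak m}(\omega\wedge\Omega^1_S)\geq\min\{n+1,n\}=n$, using that $S$ and the free module $\Omega^1_S$ have depth $n+1$.

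Feeding this into the short exact sequence
\[
0\longrightarrow \omega\wedge\Omega^1_S\longrightarrow \Omega^2_S\longrightarrow C\longrightarrow 0,
\]
the same depth estimate yields $\operatorname{depth}_{\mathfrak m}C\geq\min\{n+1,\,n-1\}=n-1\geq 2$, since $n\geq 3$. In particular $H^0_{\mathfrak m}(C)=0$, so $H^0_{\mathfrak m}(S/I(\omega))\hookrightarrow H^0_{\mathfrak m}(C)=0$ and $I(\omega)$ is saturated. The main point to get right --- and the only step that genuinely uses the geometric hypothesis $\codim\sing(\omega)\geq 2$ --- is the identification $\ker(\omega\wedge(-))=S\cdot\omega$; once the kernel is pinned down, everything else is bookkeeping with the depth lemma, and it is comforting that $n\geq 3$ leaves a margin (depth $\geq 2$ rather than the bare $\geq 1$ we actually need).
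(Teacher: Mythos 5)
Your proof is correct, and it takes a genuinely different route from the paper's. The paper argues by gluing on $\PP^n$: from $x_i^N h\in I(\omega)$ one gets local forms $\tilde\eta_i=\eta_i/x_i^N$ with $h\,d\omega=\omega\wedge\tilde\eta_i$ on the affine chart $U_i$; the same division lemma you prove by the gcd argument (exactness of $S\xrightarrow{\;\omega\;}\Omega^1_S\xrightarrow{\;\omega\wedge\;}\Omega^2_S$, the only place where $\codim\sing(\omega)\geq 2$ enters in either proof) shows the classes $[\tilde\eta_i]$ agree in $\widetilde{\Omega^1_S}/(\omega)$ on overlaps, and the local data is then globalized to a single $\eta$ with $h\,d\omega=\omega\wedge\eta$ using the vanishings $H^1\bigl(\OO_{\PP^n}^{\oplus n+1}\bigr)=0$ and $H^2\bigl(\OO_{\PP^n}(-e)\bigr)=0$, which is where $n\geq 3$ is used. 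You instead never leave graded commutative algebra: you recognize $I(\omega)=\ann_S(\overline{d\omega})$ in $C=\Omega^2_S/\omega\wedge\Omega^1_S$, so that $S/I(\omega)$ embeds in $C$, and two applications of the depth lemma give $\operatorname{depth}_{\mathfrak{m}}C\geq n-1$, hence $H^0_{\mathfrak{m}}(C)=0$ and saturation; note that ``$h$ lies in the saturation of $I(\omega)$'' is literally the statement $\overline{h\,d\omega}\in H^0_{\mathfrak{m}}(C)$, so your embedding is the right reformulation. The two mechanisms are mirror images under the standard dictionary between local cohomology of a graded module and sheaf cohomology of its sheafification, but your version has concrete advantages: it is self-contained (no sheafification, no \v{C}ech cover), it isolates exactly where the codimension hypothesis is used, and it proves the statement already for $n\geq 2$, whereas the paper's argument as written needs $n\geq 3$ since $H^2(\PP^2,\OO_{\PP^2}(-e))\neq 0$ in general. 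What the paper's formulation buys is that it installs the sheaf-theoretic setup (the sheaf $\widetilde{\Omega^1_S}/(\omega)$ and surjectivity-on-global-sections arguments) that is reused immediately in the proof of Theorem \ref{teoI}, where the analogous statement $H^1_*(\II)=0$ is established.
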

\begin{proof}
Let $h\in S$ be such that there is an $N\in \NN$ such that $x_i^Nh\in I$ (for each $i=0,\dots,n$). 
Then, by definition, there is for each $0\leq i\leq n$ a form $\eta_i\in \Omega^1_S$ such that $x_i^N h d\omega=\omega\wedge \eta_i$.
Let $\tilde{\eta}_i\in \Omega^1_{S_{x_i}}$ be defined by $\tilde{\eta}_i:=\frac{1}{x_i^N}\eta_i$. 
We can view $\tilde{\eta}_i$ as a section of the sheafification $\widetilde{\Omega^1_S}$ of $\Omega^1_S$, defined on the open set $U_i:=\{(x_0:\dots:x_n)\ |\ x_i\neq0\}$.
On $U_i\cap U_j$ we have $\omega\wedge (\tilde{\eta}_i-\tilde{\eta}_j)=0$.
As $\sing(\omega)$ does not include divisors, one has that the Koszul complex associated with $\omega$ has vanishing first homology; this implies that, on $U_i\cap U_j$, we have $[\tilde{\eta}_i]=[\tilde{\eta}_j]\in \widetilde{\Omega^1_S}/(\omega)$.
So $\{[\tilde{\eta}_i]\}_{i=0}^n$ defines a \v{C}ech $1$-cocycle on the sheaf $\widetilde{\Omega^1_S}/(\omega)$ for the affine covering $\{U_i\}_{i=0}^n$ of $\PP^n$.
As $\widetilde{\Omega^1_S}\cong \OO_{\PP^n}^{\oplus n+1}$ and $(\omega)\cong \OO_{\PP^n}(-e)$, from the short exact sequence
\[
0\to(\omega)\to \widetilde{\Omega^1_S}\to \widetilde{\Omega^1_S}/(\omega)\to 0
\]
we get the exact sequence of cohomology groups
\[
H^1\left(\OO_{\PP^n}(-e)\right)\to H^1\left(\OO_{\PP^n}^{\oplus n+1}\right)\to H^1\left(\widetilde{\Omega^1_S}/(\omega)\right)\to H^2\left( \OO_{\PP^n}(-e)\right)
\]

As $ H^1\left(\OO_{\PP^n}^{\oplus n+1}\right)=0$ if $n>1$ and $ H^2\left( \OO_{\PP^n}(-e)\right)=0$ if $n>2$ we have $H^1\left(\widetilde{\Omega^1_S}/(\omega)\right)=0$ and so there is an element $[\tilde{\eta}]\in H^0(\widetilde{\Omega^1_S}/(\omega))$.
So we get a form $\eta$ such that $h d \omega=\omega\wedge\eta$, then $h\in I$. 
\end{proof}

We now want to prove that the unfoldings ideal $I$ is aCM. For that, let us denote as $\II$ the sheafification of such ideal.

\begin{theorem}\label{teoI}
Let $\omega\in\FF(\PP^n,e)$, then $H^1_*(\II) = 0$. In particular, if $\omega\in\FF(\PP^3,e)$ then $\II$ is aCM.
\end{theorem}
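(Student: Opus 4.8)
The plan is to reduce the whole statement to the single vanishing $H^1_*(\II)=0$. By Remark \ref{1notinI} the subscheme $Y=\proj(S/I)$ has codimension $\geq 2$, so in $\PP^3$ it has dimension $\leq 1$; by Hartshorne's criterion \eqref{hart-acm}, being aCM then amounts precisely to $H^i_*(\II)=0$ for $1\leq i\leq\dim Y$, i.e.\ to the single condition $H^1_*(\II)=0$. Thus the ``in particular'' is immediate once the first assertion is proved, and it remains to establish $H^1_*(\II)=0$ for every $\omega\in\FF(\PP^n,e)$ with $n\geq 3$. Throughout I use Proposition \ref{propI}, which guarantees that $\II$ is the sheafification of the saturated ideal $I$, so that $H^0_*(\II)=I$.

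First I would present $I$ through the module of unfolding relations. Consider the graded $S$-module map $\mu\colon S\oplus\Omega^1_S\to\Omega^2_S$, $(h,\eta)\mapsto h\,d\omega-\omega\wedge\eta$, and set $U=\ker\mu$; by definition the first projection sends $U$ onto $I$. The kernel of $U\to I$ consists of the pairs $(0,\eta)$ with $\omega\wedge\eta=0$, and here the codimension hypothesis enters exactly as in Proposition \ref{propI}: since $\sing(\omega)$ carries no divisorial component, the first homology of the Koszul (wedge) complex of $\omega$ vanishes, so $\omega\wedge\eta=0$ forces $\eta=f\omega$. Hence $\ker(U\to I)\cong S(-e)$ and we obtain the two exact sequences
\begin{equation*}
0\to S(-e)\to U\to I\to 0,\qquad 0\to U\to S\oplus\Omega^1_S\xrightarrow{\ \mu\ }\mathcal M\to 0,
\end{equation*}
where $\mathcal M=\operatorname{im}\mu=S\,d\omega+\omega\wedge\Omega^1_S\subseteq\Omega^2_S$.

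Next I would sheafify both sequences and chase cohomology. The sheaves $\OO_{\PP^n}$, $\widetilde{\Omega^1_S}$ and $\widetilde{\Omega^2_S}$ are direct sums of line bundles, so their $H^1_*$ and $H^2_*$ vanish for $n\geq 3$; likewise $H^1_*(\OO_{\PP^n}(-e))=H^2_*(\OO_{\PP^n}(-e))=0$ in this range. Feeding these into the long exact sequences, the first sequence gives $H^1_*(\II)\cong H^1_*(\widetilde U)$, and the second gives $H^1_*(\widetilde U)\cong H^0_*(\widetilde{\mathcal M})/\mathcal M$. Since $\widetilde{\mathcal M}$ sits inside the free sheaf $\widetilde{\Omega^2_S}$, its module of twisted global sections is the saturation of $\mathcal M$ inside $\Omega^2_S$. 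Therefore the theorem is \emph{equivalent} to the assertion that $\mathcal M=S\,d\omega+\omega\wedge\Omega^1_S$ is a saturated submodule of $\Omega^2_S$, equivalently that the cokernel $Q=\Omega^2_S/\mathcal M$ has no nonzero section supported at the irrelevant ideal $\mathfrak m$.

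The main obstacle is precisely this saturation, and it cannot be extracted from the formal bookkeeping above: that chain of isomorphisms is reversible and merely restates $\operatorname{depth}(S/I)\geq 2$, so genuine use of integrability is unavoidable. Here the plan is to prove the sharper de Rham division identity $\mathcal M=\ker\bigl(\omega\wedge\colon\Omega^2_S\to\Omega^3_S\bigr)$, which suffices because the kernel of a map into a free module is automatically saturated (its cokernel embeds in $\Omega^3_S$, hence is torsion free). Integrability $\omega\wedge d\omega=0$ gives the inclusion $\mathcal M\subseteq\ker(\omega\wedge)$ for free; the content is the reverse inclusion, i.e.\ that every $2$-form $\alpha$ with $\omega\wedge\alpha=0$ can be written $\alpha=h\,d\omega+\omega\wedge\beta$. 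I would check this prime by prime: away from $\sing(\omega)$ it is the local Frobenius/division statement, at a division point it follows from $d\omega$ itself being divisible by $\omega$ (the circle of ideas behind Theorem \ref{teo1}), and at a Kupka point one uses the Kupka normal form, in which $\omega$ and $d\omega$ depend only on two transverse coordinates and the second homology of the wedge complex is generated by $[d\omega]$. Controlling this last gap—that the part of the second Koszul homology lying beyond $S\cdot[d\omega]$ contributes no $\mathfrak m$-torsion—is the delicate step; everything else is the cohomological chase together with the standard vanishing of the intermediate cohomology of line bundles on $\PP^n$.
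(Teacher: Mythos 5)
Your cohomological reduction is correct and is, modulo packaging, exactly the paper's own: the paper also sheafifies the map $(h,\eta)\mapsto h\,d\omega-\omega\wedge\eta$, and reduces the theorem to the statement that $\mathcal{M}=S\,d\omega+\omega\wedge\Omega^1_S$ is saturated in $\Omega^2_S$. The gap is the step you propose for the saturation, and it is fatal: the de Rham--type identity $\mathcal{M}=\ker\bigl(\omega\wedge\colon\Omega^2_S\to\Omega^3_S\bigr)$ is \emph{false} in general. Take the rational foliation in $\FF(\PP^3,3)$ with first integral $x_1x_2/x_0^2$,
\[
\omega=x_0x_2\,dx_1+x_0x_1\,dx_2-2x_1x_2\,dx_0,\qquad d\omega=3\bigl(x_2\,dx_0\wedge dx_1+x_1\,dx_0\wedge dx_2\bigr),
\]
whose singular scheme is the three lines $\{x_1=x_2=0\}$, $\{x_0=x_1=0\}$, $\{x_0=x_2=0\}$, and set $\alpha=x_0\,dx_1\wedge dx_2-2x_1\,dx_0\wedge dx_2$. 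A direct computation gives $\omega\wedge\alpha=0$; on the other hand, the graded piece of $\mathcal{M}$ in coefficient degree $1$ is just $\CC\,d\omega$ (the coefficients of $\omega$ are quadratic, so every $\omega\wedge\eta$ has coefficient degree at least $2$), and $\alpha\notin\CC\,d\omega$ because of its $dx_1\wedge dx_2$ term. Hence $\ker(\omega\wedge)/\mathcal{M}\neq 0$, the quotient $\Omega^2_S/\mathcal{M}$ does not embed in $\Omega^3_S$, and your torsion-freeness mechanism collapses. Moreover the failure occurs exactly at the point of your prime-by-prime list where you lean on division: the class of $\alpha$ is annihilated by $(x_1,x_2)$, hence supported on the non-Kupka line $\{x_1=x_2=0\}$, and that line \emph{is} a division point, since $x_0\,d\omega=-3\,\omega\wedge dx_0$. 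Divisibility of $d\omega$ by $\omega$ only shrinks $\mathcal{M}_\pp$ to $(\omega\wedge\Omega^1_S)_\pp$; dividing an arbitrary $\alpha\in\ker(\omega\wedge)$ would instead need the vanishing of the second Koszul homology of the coefficient ideal $J(\omega)$, which requires grade at least $3$ and fails along every codimension-two component. Since Theorem \ref{teoI} carries no hypothesis excluding non-Kupka components (in the example $\LL\neq\emptyset$), these primes cannot be avoided; so your ``delicate step'' is mislocated: at Kupka points division does hold, and it is at non-Kupka codimension-two points that the identity breaks.

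Worse, even the weaker fallback of single-coordinate descent is unavailable: in the same example $x_1\alpha=-\tfrac{2}{3}x_1\,d\omega-\omega\wedge dx_1$ and $x_2\alpha=\omega\wedge dx_2$ lie in $\mathcal{M}$, so $\Omega^2_S/\mathcal{M}$ has honest $x_1$-torsion while $\alpha\notin\mathcal{M}$. The only statement compatible with the example, and the only one the theorem needs, is the absence of torsion at the irrelevant ideal: if $x_i^N\theta\in\mathcal{M}$ for \emph{all} $i$ simultaneously, then $\theta\in\mathcal{M}$ (and indeed $x_0^N\alpha\notin\mathcal{M}$ for every $N$, consistently with the theorem). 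The paper attacks this by a genuinely different mechanism, with no local normal forms: it introduces $D(\omega)=\{X\in H^0_*(T_{\PP^n})\ :\ i_X d\omega=0\}$, identifies the relation module $I+(\omega)\subseteq\Omega^1_S$ with the reflexive dual $(T_S/D(\omega))^\vee$ via \cite[Lemma 4.2]{quallbrunn} (this is where Proposition \ref{propI} is actually consumed), and shows that the \v{C}ech data $\eta_i$ induce morphisms $D(\omega)\to(x_i)\cdot S$, which is what permits peeling off the factor $x_i$; integrability enters only through $i_X\omega=0$ for $X\in D(\omega)$. Any repair of your plan would have to pass through a dual argument of this kind that sees the coordinate conditions globally rather than prime by prime. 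As a caveat for your own comparison with the text: the same $\alpha$ shows that the one-variable descent displayed in the paper's reduction (``if $x_i\theta=h_i\,d\omega-\omega\wedge\eta_i$ then $\theta\in\mathcal{M}$'') is, taken literally for a single fixed $i$, also false, so that step of the printed proof must be read as requiring the compatibility over all charts at once.
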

\begin{proof}
From Proposition \ref{propI} we know that the ideal $I$ is saturated which implies that $H^0_*(\II)=I$.

As before, where are going to denote by $\widetilde{\Omega^1_S}$ the free sheaf associated to the free $S$-module $\Omega^1_S$. We are also going to consider the free $S$-module $\Omega^2_S$ and its sheafification $\widetilde{\Omega^2_S}$.

Lets consider the morphism
\[ 
\OO_{\PP^n} \oplus \widetilde{\Omega^1_S}/(\omega) \to \widetilde{\Omega^2_S}(e)
\]
that maps a local section $(h,\eta)$ as follows, 
\[
(h,\eta)\mapsto h d\omega - \omega\wedge\eta.
\]

Denoting by $\mathcal{G}$ the sheaf theoretic image of this morphism we get a short exact sequence,
\[
0\to \II \to \OO_{\PP^n} \oplus \widetilde{\Omega^1_S}/(\omega) \to \mathcal{G} \to 0.
\]

As $\widetilde{\Omega^1_S}/(\omega)$ is a free sheaf, we get that $H^1(\OO_{\PP^n}(k) \oplus \widetilde{\Omega^1_S}/(\omega)(k))=0$ for all $k\in \ZZ$. 
So to prove $H^1(I(k))=0$ it suffices to see that the morphism on global sections
\[
 H^0(\OO_{\PP^n}(k) \oplus \widetilde{\Omega^1_S}/(\omega)(k))\to H^0(\mathcal{G}(k)),
\]
is surjective.

Now lets denote $M(k):=H^0(\mathcal{G}(k))$. As $M(k)$ is a submodule of $\Omega^2_S(e+k)$ we can write elements of $M(k)$ as polynomial $2$-forms of degree $e+k$.
For a polynomial $2$-form $\theta$ to be in $M(k)$ it suffices to be in the localizations $M(k)_{(x_i)}$ for $i=0,\ldots,n$. 
From this we get that, in order for the morphism  $H^0(\OO_{\PP^n}(k) \oplus \widetilde{\Omega^1_S}/(\omega)(k))\to H^0(\mathcal{G}(k))$ to be surjective, we must see that for each $\theta\in \Omega^2_S(e+k)$ such that there is a $N>> 0$ such that for all $0\leq i \leq n$ there are pairs $(h_i,\eta_i)$, with $h_i$ a polynomial and $\eta_i\in \Omega^1_S/(\omega)(k)$, such that 
\[
x_i^N \theta=h_i d\omega - \omega\wedge \eta_i, 
\]
then there is a pair $(h,\eta)$ such that $\theta=h d\omega - \omega\wedge \eta$.

\

To prove what we need it suffices showing that if there is a 2-form $\theta\in\Omega^2_S$ such that $x_i\theta=h_i d\omega-\omega\wedge\eta_i$, then, there is polynomial $h'$ and a 1-form $\eta'$ such that
\[
 \theta = h' d\omega-\omega\wedge \eta'.
\]

For this, first we need to define the following sheaf of modules.

\begin{definition}
We define $D(\omega)$ as the kernel of the following map
\[
 \xymatrix@R=0pt@C=15pt{
H^0_*(T_{\PP^n}) \ar[r]^{} & \Omega^1_S \\
X \ar@{|->}[r]^{} & i_X(d\omega) \ .\\
}
\]
\end{definition}

\begin{remark}
\begin{itemize}
 \item[i)] Note that $D(\omega)$ it is a submodule of $H^0_*(T_\omega)$ since, given $X\in D(\omega)$ we have that $i_X(\omega\wedge d\omega) = i_X(\omega)d\omega=0$ which implies $i_X(\omega) = 0$.
 \item[ii)] ${D(\omega)}_\pp$ is free for every $\pp$ such that $d\omega_\pp\neq 0$.
\end{itemize}
\end{remark}

\begin{lemma}
Let $I+(\omega)\subseteq \Omega^1_S$ be the submodule of $\eta\in\Omega^1_S$ such that $h d\omega=\omega\wedge\eta$, and let $A$ be the cokernel of the sequence.
\[
0\to I+(\omega) \to \Omega^1_S\to A\to 0.
\]
Then we have $D(\omega)=A^\vee$.
\end{lemma}
\begin{proof}
We denote by $T_S$ the dual module to $\Omega^1_S$, which is a free module. An element $X\in A^\vee$ is an element $X\in T\simeq S^{n+1}$ such that the contraction of any elemnt of $I+(\omega)$ with $X$ is $0$. Then for all $h\in I \subseteq S$ we have $i_X(h d\omega)= i_X(\omega\wedge\eta)=0$, so $i_X(d\omega)=0$ and, in other words, $X\in D(\omega)$.
\smallskip
On the other hand, if $X\in D(\omega)$, then for all $\eta\in I+(\omega)\subseteq \Omega^1_S$ we have
\[
i_X(\eta)\omega=i_X(\omega\wedge\eta)=0.
\]
Hence $X\in \mathcal{A}^\vee$.
\smallskip
Being $\Omega^1_S$ reflexive and $I+(\omega)$ saturated in $\Omega^1_S$ (so $A$ is torsion free), we can apply \cite[Lemma 4.2]{quallbrunn} and conclude that $D(\omega)=A^\vee$ and $I+(\omega)= (T_S/D(\omega))^\vee$.
\end{proof}

\begin{lemma}
 Consider the morphism 
\[
  \xymatrix@R=0pt@C=15pt{
\Omega_S \ar[r]^{} & D(\omega)^\vee  \\
\eta\ar@{|->}[r]^{} & (X\mapsto i_X(\eta))\ .\\
}
\]
Then the kernel of this morphism is  $I+(\omega).$
\end{lemma}
\begin{proof}
Follows from the above lemma as this morphism factors through $\Omega^1_S\to A $. 
\end{proof}

\begin{remark}
 Given $\theta\in \Omega^2_S$ such that $x_i\theta=h_i d\omega - \omega\wedge \eta_i$ the form $\eta_i$ defines a morphism
 \[
\xymatrix@R=0pt@C=15pt{
D(\omega) \ar[r]^{[\eta_i]} & S \\
X \ar@{|->}[r]^{} & i_X(\eta_i) \ .\\
}
\]
Such morphism is the same that one gets by mapping $X\mapsto f$ where $f$ is such that $i_X(x_i\theta)=f\omega$. Therefore contraction with $\eta_i$ actually defines a morphism $D(\omega)\to (x_i)\cdot S$.
\end{remark}

\begin{lemma}
  Given $\theta\in \Omega^2_S$ such that $x_i\theta=h_i d\omega - \omega\wedge \eta_i$, let $[\eta_i]$ be the image of $\eta$ in $D(\omega)^\vee$. Then  $[\eta]\in (x_i)\cdot D(\omega)^\vee$.
\end{lemma}
\begin{proof}
By localizing in a prime $\pp$ such that ${D(\omega)}_\pp$ is free and such that  $(x_i)\subseteq\pp$ we have $\Hom({D(\omega)}_\pp, (x_i)_\pp)=(x_i)\cdot(D(\omega)^\vee)_\pp$ (because ${D(\omega)}_\pp$ is free). Then $[\eta]_\pp \in (x_i)\cdot(D(\omega)^\vee)_\pp$, hence $[\eta]\in (x_i)\cdot D(\omega)^\vee$.
\end{proof}

So, if there is a $\theta\in \Omega^2_S$ such that $x_i\theta=h_i d\omega - \omega\wedge \eta_i$ then $\eta\in (x_i)\cdot\Omega_S+(I+(\omega))$, so we can write $\eta= x_i\eta'+\eta''+ g\omega$ where $\eta''\in I+(\omega)\subseteq\Omega_S$.
Therefore 
\[
 h d\omega+\eta\wedge\omega=h d\omega+ x_i\eta'\wedge\omega+\eta''\wedge\omega=h d\omega+ x_i\eta'\wedge\omega+h''d\omega=(h-h'')d\omega+x_i\eta'\wedge\omega.
\]
From which it follows that 
\[
 (h-h'')d\omega=x_i(\theta-\eta'\wedge\omega)\Longrightarrow (h-h'')\in (x_i) \Longrightarrow \theta= k d\omega+\eta'\wedge\omega.
\]
This way we get that $H^1_*(\II) = 0$. Now, following eq. \ref{hart-acm} and Remark \ref{1notinI} we have that if $\omega\in\FF(\PP^3,e)$ then $\II$ is aCM.
\end{proof}

\

We now want to prove our main result:

\begin{maintheorem}\label{teo4}
Let $\omega\in\FF(\PP^3,e)$ such that $J=\sqrt{J}$ and $\KK\cap\LL =\emptyset$, then $\KK$ is aCM.
\end{maintheorem}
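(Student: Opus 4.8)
The plan is to combine the two structural results established so far: the identification of the unfolding ideal with the Kupka ideal under the stated hypotheses, and the vanishing of the intermediate cohomology of the sheafified unfolding ideal. Since being aCM is a purely cohomological condition on the ideal sheaf of $\KK$ (by the criterion \ref{hart-acm}), once we know that this ideal sheaf coincides with $\II$, the theorem will follow from the already-proven vanishing $H^1_*(\II)=0$ together with a dimension count special to $\PP^3$.

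First I would invoke Theorem \ref{technical1}: since $\omega\in\FF(\PP^3,e)$ satisfies $J=\sqrt{J}$ (equivalently, $\sing(\omega)$ is reduced, as $J$ cuts out the singular locus by Remark \ref{1notinI}) and $\KK\cap\LL=\emptyset$, we obtain the \emph{equality} of ideals $I=K$. Next, Proposition \ref{propI} tells us that $I$ is saturated; through the equality $I=K$ this transfers to $K$, so that the sheafification $\II=\widetilde{K}$ is exactly the ideal sheaf $\mathcal{I}_\KK$ of the Kupka scheme $\KK=\proj(S/K)=\proj(S/I)$. At this point the cohomology of $\mathcal{I}_\KK$ is literally the cohomology of $\II$, and no further comparison of sheaves is needed.

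Now I apply Theorem \ref{teoI}, which gives $H^1_*(\II)=0$. Since $\KK_{set}\subseteq S_2(\omega)$, the Kupka scheme is a curve in $\PP^3$, so $\dim\KK=1$. The aCM criterion \ref{hart-acm} for a subvariety $Y\subseteq\PP^n$ requires $H^i_*(\mathcal{I}_Y)=0$ for $1\leq i\leq\dim Y$; in our case $\dim\KK=1$, so the single vanishing $H^1_*(\mathcal{I}_\KK)=H^1_*(\II)=0$ already suffices. Therefore $\KK$ is aCM.

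The substance of the argument has in fact been spent in the preceding results, so the remaining obstacle is essentially bookkeeping: one must be careful that the scheme structure on $\KK$ coming from $K$ is the correct one, and in particular that $K$ is saturated, so that $\II$ really is $\mathcal{I}_\KK$ and not a possibly non-saturated approximation. This is precisely where the hypotheses $J=\sqrt{J}$ and $\KK\cap\LL=\emptyset$ enter, via Theorem \ref{technical1}: without them one only has $\sqrt{I}=\sqrt{K}$ (Theorem \ref{teo1}), and the equality of ideal sheaves could fail. The restriction to $\PP^3$ is used only through the dimension count, which collapses the aCM criterion to the single group $H^1_*$ that Theorem \ref{teoI} controls; in higher $\PP^n$ one would additionally need the vanishing of $H^i_*(\II)$ for $i$ up to $\dim\KK$, which is not provided here.
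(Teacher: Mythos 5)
Your proposal is correct and follows essentially the same route as the paper: the paper's own proof consists precisely of applying Theorem \ref{technical1} to get $I=K$ and then invoking Theorem \ref{teoI} (whose ``in particular'' clause already contains the $\PP^3$ dimension count via Remark \ref{1notinI} and the criterion \ref{hart-acm}). The extra bookkeeping you supply --- that saturation of $I$ (Proposition \ref{propI}) transfers to $K$ so that $\II$ really is the ideal sheaf of $\KK$ --- is exactly the detail the paper leaves implicit, so your write-up is a faithful, slightly more explicit version of the same argument.
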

\begin{proof}
We need only to apply Theorem \ref{technical1}, so we know that $I=K$, and by our previous Theorem the result follows. 
\end{proof}

\section{Applications}
In order to apply our main result, let us remark some results of holomorphic vector bundles on the projective space. The main reference is the book \cite{okonek}.

We say that a vector bundle $E$ over $\PP^n$ splits if it is a direct sum of line bundles. 

The first result that we have in mind is the \emph{Splitting criterion of Horrocks} \cite[Theorem 2.3.1 p. 21]{okonek}.

\begin{theorem}[Horrocks]\label{spl}
 A holomorphic vector bundle $E$ over $\PP^n$ splits if and only if
\[
H^i_{\ast}(\PP^n,E)=0
\mbox{ for all }1\leq i\leq n-1.
\]
\end{theorem}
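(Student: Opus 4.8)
\section*{Proof proposal}

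The plan is to treat the two implications separately, with essentially all of the substance lying in the ``if'' direction. The ``only if'' direction is immediate: if $E\cong\bigoplus_j\OO_{\PP^n}(a_j)$ then $H^i_*(E)=\bigoplus_j H^i_*(\OO_{\PP^n}(a_j))$, and the standard computation of the cohomology of line bundles on projective space gives $H^i(\PP^n,\OO_{\PP^n}(\ell))=0$ for every $\ell\in\ZZ$ and every $i$ with $0<i<n$. Hence $H^i_*(E)=0$ for $1\le i\le n-1$.

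For the converse I would argue by induction on $n$. The base case $n=1$ is Grothendieck's theorem that every holomorphic vector bundle on $\PP^1$ splits, and there the hypothesis is vacuous since the range $1\le i\le 0$ is empty. For the inductive step, fix $n\ge 2$, assume the statement on $\PP^{n-1}$, and let $E$ satisfy $H^i_*(E)=0$ for $1\le i\le n-1$. Choosing a hyperplane $H=\{s=0\}\cong\PP^{n-1}$, I would twist the restriction sequence
\[
0\to E(-1)\xrightarrow{\ \cdot s\ } E\to E|_H\to 0
\]
by all $\OO_{\PP^n}(\ell)$ and pass to the associated long exact sequence in $H^*_*$. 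For $1\le i\le n-2$ the term $H^i_*(E|_H)$ is squeezed between $H^i_*(E)$ and $H^{i+1}_*(E)$, both of which vanish by hypothesis (note $i+1\le n-1$); hence $E|_H$ has vanishing intermediate cohomology on $\PP^{n-1}$ and, by the induction hypothesis, splits as $E|_H\cong\bigoplus_j\OO_H(a_j)$.

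The core of the argument is to lift this splitting from $H$ to $\PP^n$. Each summand of $E|_H$ supplies a nowhere-vanishing section $\sigma_j\in H^0(E|_H(-a_j))$ (the twisted coordinate inclusion), and I would lift it to a global section $\tilde\sigma_j\in H^0(E(-a_j))$ using surjectivity of the restriction map $H^0(E(-a_j))\to H^0(E|_H(-a_j))$; this surjectivity is exactly the vanishing of $H^1(E(-a_j-1))$, which is a summand of $H^1_*(E)=0$ (here the hypothesis for $i=1$ is used, and $n\ge 2$ guarantees $1\le n-1$). Assembling the lifts yields a morphism $\phi\colon\bigoplus_j\OO_{\PP^n}(a_j)\to E$ of bundles of equal rank whose restriction $\phi|_H$ is an isomorphism.

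The main obstacle, and the final step, is to promote ``$\phi|_H$ is an isomorphism'' to ``$\phi$ is an isomorphism'' on all of $\PP^n$. I would consider $\det\phi$, a section of $\det E\otimes\OO_{\PP^n}(-\sum_j a_j)$, whose zero scheme is either empty or a hypersurface; since $\phi|_H$ is an isomorphism, $\det\phi$ does not vanish along $H$, and in particular $\det\phi\neq 0$. The key geometric input is that for $n\ge 2$ any hypersurface of positive degree in $\PP^n$ meets every hyperplane (two hypersurfaces in $\PP^n$, $n\ge 2$, always intersect by projective dimension theory), so the zero locus of $\det\phi$, being disjoint from $H$, must be empty. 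Therefore $\phi$ is an isomorphism and $E$ splits, completing the induction. The delicate points to watch are the bookkeeping of the cohomological ranges (ensuring both $H^i_*(E)$ and $H^{i+1}_*(E)$ vanish in the relevant band, and that $i=1$ lies inside $[1,n-1]$) and the verification that the source and target of $\phi$ have equal rank, so that $\det\phi$ is defined.
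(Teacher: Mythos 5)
The paper does not prove this statement at all: Theorem \ref{spl} is quoted as the classical splitting criterion of Horrocks, with a citation to \cite[Theorem 2.3.1]{okonek}, so there is no in-paper proof to compare against. Your argument is correct and is essentially the standard inductive proof found in that reference: the cohomology bookkeeping is right (for $1\le i\le n-2$ the groups $H^i_*(E|_H)$ sit between $H^i_*(E)$ and $H^{i+1}_*(E(-1))$, both zero since $i+1\le n-1$), the lifting of the splitting uses exactly the surjectivity of $H^0(E(-a_j))\to H^0(E|_H(-a_j))$ granted by $H^1_*(E)=0$, and your final step is sound: $\det\phi$ is a nonzero section of a line bundle $\OO_{\PP^n}(d)$ whose zero scheme, being an effective divisor (or empty) disjoint from the hyperplane $H$, must be empty because in $\PP^n$ with $n\ge 2$ any hypersurface of positive degree meets every hyperplane; equivalently, one sees $d=0$ and $\det\phi$ is a nonzero constant. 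The only points worth making explicit in a full write-up are (a) that $\phi|_H$ is an isomorphism because the lifts restrict to the coordinate inclusions of the splitting of $E|_H$, and (b) that the source and target of $\phi$ have the same rank since restriction to $H$ preserves rank; you flag both, so the proposal is complete as a proof sketch and consistent with the result as the paper uses it.
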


This criterion has a surprising consequence
\cite[Theorem 2.3.2 p. 22]{okonek}.

\begin{theorem}\label{spl1}
A holomorphic vector bundle $E$ over $\PP^n$ splits in a sum of line bundles when its restriction to some plane $\PP^2\subset\PP^n$ splits
\end{theorem}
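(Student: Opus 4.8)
The plan is to deduce the statement from the Horrocks splitting criterion (Theorem \ref{spl}) by induction on $n$, the essential point being a single hyperplane-restriction step. Concretely, I would first isolate the following claim: if $H\cong\PP^{n-1}\subset\PP^n$ is a hyperplane and $E|_H$ splits, then $E$ splits. Granting this, the theorem follows by choosing a flag $\PP^2\subset\PP^3\subset\cdots\subset\PP^n$ and applying the claim successively, since at each stage the restriction of the (now known to split) bundle to the previous plane is the originally given split bundle.

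To prove the claim I would use the twisted restriction sequence
\[
0\to E(\ell-1)\to E(\ell)\to E|_H(\ell)\to 0,\qquad \ell\in\ZZ,
\]
together with its long exact cohomology sequence. Because $E|_H\cong\bigoplus\OO_H(a_i)$ is split on $\PP^{n-1}$, we have $H^j(E|_H(\ell))=0$ for all $\ell$ and all $1\le j\le n-2$. By Horrocks it then suffices to show $H^i_*(E)=0$ for $1\le i\le n-1$. For the middle range $2\le i\le n-2$, both $H^{i-1}(E|_H(\ell))$ and $H^i(E|_H(\ell))$ vanish, so the maps $H^i(E(\ell-1))\to H^i(E(\ell))$ are isomorphisms for every $\ell$; combined with Serre vanishing for $\ell\gg0$ this forces $H^i_*(E)=0$.

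The delicate cases are the two extremes $i=1$ and $i=n-1$, where only one of the neighbouring $E|_H$-cohomologies vanishes and one obtains a monotonicity statement rather than an isomorphism. For $i=n-1$, vanishing of $H^{n-2}(E|_H(\ell))$ makes $H^{n-1}(E(\ell-1))\to H^{n-1}(E(\ell))$ injective, so $\dim H^{n-1}(E(\ell))$ is non-decreasing in $\ell$; since it is $0$ for $\ell\gg0$ by Serre vanishing, it must vanish for all $\ell$. For $i=1$, vanishing of $H^1(E|_H(\ell))$ (valid as $n\ge3$) makes $H^1(E(\ell-1))\to H^1(E(\ell))$ surjective, so $\dim H^1(E(\ell))$ is non-increasing; using Serre duality $H^1(E(\ell))\cong H^{n-1}(E^\vee(-\ell-n-1))^\vee$ together with Serre vanishing gives $H^1(E(\ell))=0$ for $\ell\ll0$, and monotonicity propagates this to all $\ell$. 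This boundary bookkeeping—tracking which $E|_H$-cohomology survives and invoking Serre duality to obtain vanishing at the correct end—is the step I expect to require the most care; the middle range and the inductive packaging are comparatively routine.
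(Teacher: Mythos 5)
Your proof is correct and takes essentially the same route as the paper, which in fact offers no proof of its own but only cites Okonek--Schneider--Spindler (Theorem 2.3.2, p.~22), where precisely this argument appears: induction along a flag $\PP^2\subset\PP^3\subset\cdots\subset\PP^n$, with each hyperplane-restriction step handled via Horrocks' criterion applied to the long exact sequence of $0\to E(\ell-1)\to E(\ell)\to E|_H(\ell)\to 0$. Your treatment of the boundary cases $i=1$ and $i=n-1$ (monotonicity of dimensions plus Serre vanishing, with Serre duality supplying the vanishing at $\ell\ll0$) is exactly the standard bookkeeping and is sound.
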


For a a rank two vector bundle $E$ over $\PP^3$, we can say a little more \cite[Theorem 1.2 p. 1221]{omegar-marcio} see also \cite{gir-pancoll}.

\begin{theorem}\label{spl2} A rank two vector bundle $E$ over $\PP^3$ splits if and only if \linebreak $H_{\ast}^1(\PP^3,E)=0$ or $H^2_{\ast}(\PP^3,E)=0$
\end{theorem}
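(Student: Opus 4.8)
The plan is to deduce the statement from Horrocks' criterion (Theorem \ref{spl}) by observing that, for a rank two bundle, the two relevant intermediate cohomology modules are Serre-dual to one another; hence the vanishing of either one forces the vanishing of both, and the criterion then applies.

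First I would dispose of the easy direction. If $E$ splits as $\bigoplus_i \OO_{\PP^3}(a_i)$, then on $\PP^3$ one has $H^i(\PP^3,\OO(k))=0$ for $0<i<3$ and every $k\in\ZZ$, so $H^1_*(\PP^3,E)$ and $H^2_*(\PP^3,E)$ both vanish; in particular at least one of them does. This needs no further work.

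For the converse, the key observation is the rank two self-duality $E^\vee\cong E(-c_1)$, where $c_1=c_1(E)$: the wedge pairing $E\otimes E\to \Lambda^2E=\det E$ identifies $E$ with $E^\vee\otimes\det E$. Combining this with Serre duality on $\PP^3$ (whose canonical bundle is $\OO(-4)$) yields, for every $k\in\ZZ$, an isomorphism
\[
H^2(\PP^3,E(k))\cong H^1\!\left(\PP^3,E(-c_1-k-4)\right)^{*}.
\]
As $k$ runs over $\ZZ$, the twist $-c_1-k-4$ also runs over all of $\ZZ$, so this is a graded isomorphism, up to a shift in the grading, between $H^2_*(\PP^3,E)$ and the dual of $H^1_*(\PP^3,E)$. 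Consequently $H^2_*(\PP^3,E)=0$ if and only if $H^1_*(\PP^3,E)=0$.

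Finally I would conclude: if either $H^1_*(\PP^3,E)=0$ or $H^2_*(\PP^3,E)=0$, the duality above forces the other module to vanish as well, so both vanish, and Theorem \ref{spl} with $n=3$ (which requires exactly $H^1_*=H^2_*=0$) gives that $E$ splits. The crux of the argument, and the only place where the rank two hypothesis is genuinely used, is the self-duality $E^\vee\cong E(-c_1)$ that makes the two middle cohomology modules dual to each other; I expect this step to be the main point, since for bundles of higher rank no such identification is available and the statement indeed fails.
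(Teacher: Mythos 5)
Your proof is correct and follows essentially the same route as the paper's: both use the rank-two self-duality $E^{\vee}\cong E(-c_1)$ together with Serre duality on $\PP^3$ to show that $H^1_{*}(\PP^3,E)$ and $H^2_{*}(\PP^3,E)$ vanish simultaneously, and then invoke Horrocks' criterion (Theorem \ref{spl}). Your write-up is in fact slightly more careful, as it spells out the easy direction and the dualization of vector spaces that the paper leaves implicit.
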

\begin{proof}
Since $E$ has rank two, $E^{\vee}=E(-c_1)$, wher $c_1$ denotes the  first Chern class of $E$.

From Serre duality and for any $k\in \ZZ$ we have 
\[
H^1(\PP^3,E(k))=H^2(\PP^3,E(k)^{\vee}\otimes\OO_{\PP^3}(-4))=H^2(\PP^3,E(-c_1-k-4)).
\]

It follows that  $H^1_{\ast}(\PP^3,E)=0$ if and only if $H^2_{\ast}(\PP^3,E)$. The splitting follows from Theorem \ref{spl}. 
\end{proof}

As a first application of Theorem \ref{teo4} we can state the following result, which can be related to one implication of \cite[Theorem 1, p.~2]{correa-etal}:

\begin{fappl}\label{SplitTan}
Let $\omega\in\FF(\PP^3,e)$ be a foliation such that $J=\sqrt{J}$ and $\LL=\emptyset$, where 
\[
\xymatrix{
0 \ar[r] & T_\omega \ar[r] & T_{\PP^n} \ar[r]^{\omega} & \mathscr{J}(e) \ar[r] & 0
}
\]
where $\mathscr{J}$ denotes the sheafification of the ideal $J$, then $T_\omega$ splits.
\end{fappl}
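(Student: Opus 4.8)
The plan is to present $T_\omega$ as a rank-two vector bundle on $\PP^3$ with vanishing intermediate cohomology and then invoke the splitting criterion for rank-two bundles, Theorem \ref{spl2}. First I would record what the hypotheses buy us geometrically: since $\LL\supseteq S_k(\omega)$ for $k\ge 3$ and, in general, can also carry codimension-two components, the assumption $\LL=\emptyset$ forces $\sing(\omega)=\KK$ and, in particular, that $\sing(\omega)$ is of \emph{pure} codimension two. By the criterion of \cite{gir-pancoll,correa-etal} this already makes $T_\omega$ locally free, necessarily of rank $n-1=2$ on $\PP^3$. Thus $T_\omega$ is exactly the sort of sheaf to which Theorem \ref{spl2} applies, and it suffices to establish $H^1_*(T_\omega)=0$.

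Next I would extract this vanishing from the defining sequence $0\to T_\omega\to T_{\PP^3}\xrightarrow{\omega}\mathscr J(e)\to 0$. The Euler sequence gives $H^1_*(T_{\PP^3})=0$, so the associated long exact sequence identifies
\[
H^1_*(T_\omega)=\mathrm{coker}\big(H^0_*(T_{\PP^3})\xrightarrow{\omega}H^0_*(\mathscr J(e))\big).
\]
The image of this map is easy to pin down: again by the Euler sequence every global vector field is, modulo the radial field, a polynomial combination of the $\partial/\partial x_i$, and $i_{\partial/\partial x_i}\omega=A_i$, so the image is precisely the ideal $J=(A_0,\dots,A_n)$ in the relevant degrees. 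The target $H^0_*(\mathscr J(e))$, on the other hand, is the degree-$e$ shift of the \emph{saturation} of $J$. Hence the statement reduces to showing that $J$ is already saturated, i.e. that every section of $\mathscr J(e+k)$ is realized as some $i_X\omega$.

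Finally, the saturation is where the machinery enters. Since $\LL=\emptyset$ one has $J=K$, and Theorem \ref{technical1} (applicable because $J=\sqrt J$ and $\KK\cap\LL=\emptyset$) then gives $I=K=J$; Proposition \ref{propI} says $I$ is saturated, so $J$ is saturated and the cokernel above vanishes. Equivalently, and more in the spirit of the paper, Theorem \ref{teo4} says $\KK$ is aCM, and since $\mathscr J=\mathcal I_\KK$ this packages both $H^1_*(\mathscr J)=0$ and the saturation that aCM entails. Either route yields $H^1_*(T_\omega)=0$, whence Theorem \ref{spl2} concludes that $T_\omega$ splits. I expect the genuine obstacle to be exactly the surjectivity of $H^0_*(T_{\PP^3})\to H^0_*(\mathscr J(e))$: identifying the image with $J$ is routine, but controlling the target — showing no extra sections appear upon saturating — is precisely the content supplied by the Main Theorem, so the real work sits in Theorem \ref{teo4} rather than in the cohomological bookkeeping carried out here.
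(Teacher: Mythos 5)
Your proof is correct, and it reaches Theorem \ref{spl2} through a genuinely different vanishing than the paper does. The paper's proof goes through $H^2$: by Theorem \ref{teo4}, the scheme cut out by $\mathscr{J}$ (which coincides with $\KK$ once $\LL=\emptyset$) is aCM, so $H^1_*(\mathscr{J})=0$, and the long exact sequence together with $H^2_*(T_{\PP^3})=0$ gives $H^2_*(T_\omega)=0$, the other admissible hypothesis in Theorem \ref{spl2}. You instead compute $H^1_*(T_\omega)$ as the cokernel of $H^0_*(T_{\PP^3})\xrightarrow{\ \omega\ }H^0_*(\mathscr{J}(e))$, identify it with (a shift of) the quotient of the saturation of $J$ by $J$, and show that quotient vanishes. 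The difference in cost is real: your route consumes only the $H^0$-level inputs --- Proposition \ref{propI} ($I$ saturated) and Theorem \ref{technical1} ($I=K$) --- and never uses the vanishing $H^1_*(\II)=0$ of Theorem \ref{teoI}, which is the technical heart of the paper and exactly what the paper's $H^2$ route requires. So, contrary to your closing remark, on your route the ``real work'' is not all of Theorem \ref{teo4} but only its easier half.

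Two precisions. First, the step ``$\LL=\emptyset$ implies $J=K$'' does not follow from $\LL=\emptyset$ alone: if $J$ were primary and non-radical with $K=\sqrt{J}$, one would have $L=(J:K^\infty)=S$, hence $\LL=\emptyset$, yet $J\subsetneq K$. The equality does hold here because $J=\sqrt{J}$ is in force: no minimal prime of $J$ is the irrelevant ideal (as $\sing(\omega)\neq\emptyset$), so $\LL=\emptyset$ forces every minimal prime of $J$ to contain $K$, whence $K\subseteq\sqrt{J}=J\subseteq K$. Second, once radicality is invoked, the detour through $I$ and $K$ becomes superfluous for your purposes: a radical homogeneous ideal all of whose minimal primes are non-irrelevant is automatically saturated, so the saturatedness of $J$ follows from $J=\sqrt{J}$ directly, and your argument then proves the theorem using only the local-freeness criterion of \cite{gir-pancoll} and Theorem \ref{spl2}, bypassing Theorems \ref{teo4}, \ref{teoI}, \ref{technical1} and Proposition \ref{propI} entirely.
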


\begin{proof}
Since $\LL=\emptyset$ we have that $\mathscr{J}=\KK$, and so it is  aCM. On the other hand $\mathscr{J}=\KK$ implies that $T_{\omega}$ is locally free, see \cite[Theorem 3.2, p.~848]{gir-pancoll}. Now, we could apply \cite[Theorem 3.3, p.~849]{gir-pancoll} to get the splitting of $T_{\omega}$, but we choose to make the following computation for a sake of clarity.

So, we just consider the long exact sequence associated to the short exact sequence given in the statement
\[
\xymatrix@R=10pt@C=15pt{
0 \ar[r] & H^0(\PP^3,T_\omega(e')) \ar[r] & H^0(\PP^3,T_{\PP^n}(e')) \ar[r]^-{\omega} & H^0(\PP^3,\mathscr{J}(e'+e)) \ar[r]^-{\delta} & \\
\ar[r]^-{\delta} & H^1(\PP^3,T_\omega(e')) \ar[r] & \underset{\simeq 0}{H^1(\PP^3,T_{\PP^n}(e'))} \ar[r] & \underset{\simeq 0}{H^1(\PP^3,\mathscr{J}(e'+e))} \ar[r]^-{\delta} & \\
}
\]
\[
\xymatrix@R=10pt@C=15pt{
\ar[r]^-{\delta} & H^2(\PP^3,T_\omega(e')) \ar[r] & \underset{\simeq 0}{H^2(\PP^3,T_{\PP^n}(e'))} \ar[r] &  H^2(\PP^3,\mathscr{J}(e'+e)) \ar[r]^-{\delta} & \\
\ar[r]^-{\delta} & H^{3}(\PP^3,T_\omega(e')) \ar[r] & H^{3}(\PP^3,T_{\PP^n}(e')) \ar[r] & H^{3}(\PP^3,\mathscr{J}(e'+e))  &\\
}
\]
From where we see that
$H^2(\PP^3,T_\omega(e'))\simeq 0$, for all $e'$. By the Theorem \ref{spl2} $T_\omega$ splits, from where we get our result.
\end{proof}

\

We would like to mention here a geometric interpretation of our hypothesis of $J=\sqrt{J}$ and $\LL=\emptyset$, which can be seen as $J=\sqrt{J}$ and $\sing(\omega)=\KK=\overline{\KK_{set}}$, see \cite[Lemma 4.6, p.~12]{mmq}.
 
 \
 
Let us consider a connected component $\KK_0\subset \KK_{set}$. It is very well known, see  \cite{kupka} or \cite[Teorema 1.8, p.~38]{ln-book}, that there exits 
$\{(U_{\alpha},\varphi_{\alpha}),\eta\},$ where $\{U_{\alpha}\}$ is an open covering of $\KK_0$ by coordinate open sets of $\PP^n$, a family of submersions $\varphi_{\alpha}:U_{\alpha}\rightarrow\CC^2$ such that 
$\varphi_{\alpha}^{-1}(0)=U_{\alpha}\cap\KK_0$ and a germ of a 1--form $\eta\in \Omega^1_0(\CC^2)$ with an isolated singularity at $0$, called \textit{transversal type of} the component $\KK_0$, such that 
$\omega_{\alpha}=\varphi_{\alpha}^{\ast}\eta$ defines the foliation on the open set $U_{\alpha}$. It follows that $\omega_{\alpha}=\lambda_{\alpha\beta}\omega_{\beta}$ in $U_{\alpha}\cap U_{\beta}$ and $\lambda_{\alpha\beta}\in \OO_{\PP^n}^{\ast}(U_{\alpha}\cap U_{\beta})$ is the cocycle that defines the bundle $\OO_{\PP^n}(e)$ in $\PP^n=\cup U_{\alpha}$.

Therefore, a connected component of $\KK_0\subset\KK$ is the pair $(\KK_0,\eta_0)$ with $\eta_0$ its transversal type. $\KK_0$ is a codimension two submanifold. 

Now, if the transversal type $\eta_0=A(x,y)dx-B(x,y)dy$, the function \linebreak $(A(x,y),B(x,y))$ defines the scheme structure on the variety $\KK_0$. On the other hand, if $\varphi(\mathbf{z},x,y)=(x,y)$ at any point $p\in \KK_0$, the tangent sheaf $T_{\omega}(p) = \{X\in T_{\PP^n}(p):\ i_X\eta = 0 \}$ is locally free and generated as an $\OO_{\PP^n}$ module by the vector fields
\[
T_{\omega} = \left\langle X_{\omega},\frac{\partial}{\partial z_1},\dots,\frac{\partial}{\partial z_{n-2}}\right\rangle\quad\mbox{ where}\quad
X_{\omega} := B(x,y)\frac{\partial}{\partial x} + A(x,y)\frac{\partial}{\partial y} \ .
\]

The condition of $d\omega|_{\KK_{set}}\neq 0$ implies that 
\[
d\eta(0)=\left(\frac{\partial B}{\partial x}(0)+\frac{\partial A}{\partial y}(0)\right)dx\wedge dy\neq0
\]
and the linear part of $\eta$ has a non trivial eigenvalue. By making a linear change of coordinates we may assume that the eigenvalues are $1$ and $\lambda$, from where we get that 
\[
 \eta = (x + \{hot\}) \ dy - (\lambda y + \{hot\}) \ dx\ ,
\]
where $hot$ stands for {\lq}higher order terms{\rq}. In the case where $\lambda\neq 0$ it is clear that $\sing(\omega)$ is reduced around the component $\KK_0$. 

On the other hand, if $\lambda=0$, we say that the component $\KK_0$ has \textit{saddle--node} transversal type. In this case, there are $1\leq q\in\ZZ,\quad \alpha \in \CC$ and a holomorphic function $P$ with multiplicity at least $q$ at $(0,0)$, such that $\eta$ has the holomorphic normal form
\[
 \eta = \left(x(1+\alpha y^q)+yP(x,y)\right) \ dy -y^{q+1} \ dx 
\]
which forces the component $\KK_0$ to be non reduced. The Milnor number or multiplicity $\mu(\KK_0)$ at the component $\KK_0$ is $q$. 

Therefore, our hypothesis of $J=\sqrt{J}$ implies that there are no irreducible component of the Kupka set with saddle--node transversal type.
\
\begin{example} Consider the one parameter family of foliations 
\[\omega_t=(x+(1+t)y)z dx- xz dy-x(x+t y)dz\in \FF(\PP^3,3),\quad t\in\CC
\]

Set $J_t=J(\omega_t)$. For $t\neq 0$ we have $J_t=\sqrt{J_t}$, but it is not true for $t=0$.

On the other hand, the singular set is the Kupka scheme for all $t$, and the tangent sheaf $T_{\omega_t}=\OO_{\PP^3}\oplus\OO_{\PP^3}(1)$.
\end{example}

\
For the following example, we will use the next result of complex vector bundles.

\begin{lemma}\label{Lemma} A rank two holomorphic vector bundle $F$ over  $\PP^n$ with $c_1(F)=0\mbox{ or }  -1$ and $c_2(F)=0$ splits as $F\simeq \OO_{\PP^n}\oplus \OO_{\PP^n}(c_1(F))$.
\end{lemma}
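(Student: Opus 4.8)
The plan is to split off a trivial line subbundle by producing a global section of $F$ with no divisorial zeros. The point of the hypothesis $c_2(F)=0$ is this: if $s\in H^0(F)$ has zero scheme $Z=Z(s)$ of codimension $\geq 2$, then the class of $Z$ equals $c_2(F)=0$, so $Z=\emptyset$ and $s$ is nowhere vanishing. Such an $s$ yields a short exact sequence $0\to\OO_{\PP^n}\to F\to\OO_{\PP^n}(c_1(F))\to 0$, which splits because $\mathrm{Ext}^1(\OO(c_1(F)),\OO)=H^1(\OO(-c_1(F)))=0$ for $c_1(F)\in\{0,-1\}$; comparing determinants then forces $F\simeq\OO\oplus\OO(c_1(F))$. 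So the whole problem reduces to exhibiting a section of $F$ with no divisorial component.

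First I would rule out subline bundles of positive degree, and this is precisely where the hypothesis $n\geq 3$ enters. Let $t$ be the largest integer with $H^0(F(-t))\neq 0$ (finite by Serre vanishing), and suppose $t\geq 1$. By maximality a nonzero $s\in H^0(F(-t))$ has no divisorial zeros, hence defines a saturated $\OO(t)\hookrightarrow F$ with $0\to\OO(t)\to F\to\mathcal{I}_Z(c_1(F)-t)\to 0$, where $Z$ has codimension $2$ and class $c_2(F(-t))=t(t-c_1(F))>0$; in particular $Z\neq\emptyset$. Reading $s$ as a section of the rank two bundle $F(-t)$ vanishing on the local complete intersection $Z$, the Serre correspondence forces $Z$ to be subcanonical, $\omega_Z\simeq\OO_Z(c_1(F)-2t-n-1)$. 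For $n\geq 3$, $t\geq 1$ and $c_1(F)\leq 0$ this twist is $\leq -6$; cutting $Z$ with a general linear $\PP^3$ (if $n>3$) reduces by adjunction to a curve $Z'$ with $\omega_{Z'}\simeq\OO_{Z'}(c_1(F)-2t-4)$, again of twist $\leq -6$. On every one-dimensional reduced component the degree of $\omega_{Z'}$ is $\geq -2$, whereas $\OO_{Z'}(m)$ with $m\leq -6$ has degree $\leq -6$ there, a contradiction. Hence $t\leq 0$, so $F$ has no subline bundle of positive degree. (On $\PP^2$ the scheme $Z$ would be zero-dimensional and the twist invisible; this is exactly why the statement fails there.)

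With this in hand the existence of a section is immediate when $c_1(F)=-1$: the Bogomolov discriminant is $4c_2(F)-c_1(F)^2=-1<0$, so $F$ is not semistable and must contain a subline bundle $\OO(a)$ with $a>c_1(F)/2=-\tfrac12$, i.e. $a\geq 0$; by the previous step $a=0$, so $H^0(F)\neq 0$. Any nonzero section then has no divisorial zeros (such a zero would produce a subline bundle of degree $\geq 1$), and the reduction of the first paragraph gives $F\simeq\OO\oplus\OO(-1)$.

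The main obstacle is the case $c_1(F)=0$, where the second step only yields that $F$ is semistable with $c_2(F)=0$, i.e. of vanishing discriminant; by Theorem \ref{spl2} the assertion is equivalent to the vanishing $H^1_*(F)=0$, which is the real content. Here I would argue as follows: by Grauert--M\"ulich the generic splitting type of a semistable rank two bundle with $c_1=0$ is balanced, hence $(0,0)$, so $F$ restricts trivially to the generic line; since the jumping locus is cut out in degree $c_2(F)=0$ it is empty, so $F$ is trivial on every line, and a uniform bundle of splitting type $(0,0)$ on $\PP^n$ is trivial, giving $F\simeq\OO\oplus\OO$. Equivalently one may invoke that a semistable bundle of zero discriminant on a simply connected projective manifold is projectively flat, hence trivial here. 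I expect this triviality input for the self-dual, semistable, discriminant-zero bundle to be the crux; the remaining bookkeeping (Riemann--Roch gives $\chi(F)=2$ and the second step gives $H^3(F)=H^0(F(-4))^\vee=0$, so one is only missing $H^2(F)=H^1(F(-4))^\vee=0$) is routine once that input is available.
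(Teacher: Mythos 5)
Your route is genuinely different from the paper's (which restricts $F$ to a plane via Horrocks' criterion, produces a section there by Riemann--Roch and Serre duality, and argues that its zero scheme has degree $c_2=0$, hence is empty), but it has a fatal gap exactly at its load-bearing step, the one ruling out a subline bundle of positive degree. The inequality ``on every one-dimensional reduced component the degree of $\omega_{Z'}$ is $\geq -2$'' is not available. First, for a non-reduced curve the restriction $\omega_{Z'}\otimes\OO_C$ to a reduced component $C$ is \emph{not} the dualizing sheaf of $C$ (one has $\omega_C\simeq\mathcal{H}om_{\OO_{Z'}}(\OO_C,\omega_{Z'})$, which differs by a conductor term), so the genus bound for reduced curves says nothing about it. Second, and decisively, the underlying bound is simply false for the non-reduced l.c.i.\ curves that can occur as such zero schemes: for every $k\geq -1$ the double line (ribbon) $D_k\subset\PP^3$ with homogeneous ideal $(x^2,\,xy,\,y^2,\,w^{k+1}x-z^{k+1}y)$ is a local complete intersection of degree $2$ and arithmetic genus $-1-k$; its Picard group is $\ZZ\cdot\OO_{D_k}(1)$ (truncated exponential sequence, using $H^1(\PP^1,\OO_{\PP^1}(k))=0$), and since $D_k$ is Gorenstein its dualizing sheaf is a line bundle, forced by degree reasons to be $\omega_{D_k}\simeq\OO_{D_k}(-2-k)$. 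So subcanonical l.c.i.\ curves in $\PP^3$ exist with arbitrarily negative twist, and the contradiction you want in step two never materializes.

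This is not a slip one can patch in the write-up; it breaks the strategy. By the Serre construction --- applicable here because $D_k$ is l.c.i.\ and subcanonical and $H^2(\PP^3,\OO_{\PP^3}(j))=0$ for all $j$, so the relevant extension class exists and gives a locally free sheaf --- the genus $-6$ double line $D_5$, with $\omega_{D_5}\simeq\OO_{D_5}(-7)$, is the zero scheme of a section of a rank two bundle $E$ on $\PP^3$ with $c_1(E)=-3$, $c_2(E)=2$. Then $F=E(1)$ satisfies $c_1(F)=-1$, $c_2(F)=0$ and $H^0(F(-1))\neq 0$: this is precisely the configuration ($t\geq 1$ with zero locus of codimension two) that your second step declares impossible, and such an $F$ cannot be $\OO_{\PP^3}\oplus\OO_{\PP^3}(-1)$, since that bundle has no sections after twisting by $\OO_{\PP^3}(-1)$. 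Hence your steps two and three fail as stated; any correct argument has to use more than the numerical data $c_1\in\{0,-1\}$, $c_2=0$ together with maximality of the twist. Separately, your case $c_1=0$ is, by your own admission, only a sketch: the crux (triviality of a semistable rank two bundle with vanishing discriminant) is outsourced to Grauert--M\"ulich plus Barth's jumping-line divisor, or to Donaldson--Uhlenbeck--Yau, none of which is carried out, whereas the paper's argument is elementary throughout.
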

\begin{proof} The proof in $\PP^2,$ may be found in an appendix of \cite{CCF} in the case $c_1(F)=0$. We include the proof for completness.

By Theorem \ref{spl1}, it is sufficient to prove the theorem in 
$\PP^2$.
By Riemann--Roch in $\PP^2,$ we have 
\[
\chi(F)=h^0(F)-h^1(F)+h^2(F)=\frac{1}{2}(c_1^2-2c_2+3c_1+4)
=\begin{cases}1 &\mbox{ if } c_1(F)=-1 \\ 
              2 &\mbox{ if } c_1(F)=0,
 \end{cases}             
\] 
in any case, we have  $h^0(F)+h^2(F)=\chi(F)+h^1(F)\geq 1.$

Since $F$ has rank two $F^{\vee}=F(-c_1(F))$, Serre duality implies
\[
h^2(F)=h^0(F(-(c_1(F)+3))),\quad\mbox{moreover}\quad c_1(F)+3>0
\]
 
Now  
$h^0(F)\geq h^0(F(-k))$ for all $k>0$, and we get $h^0(F)\geq1$.

A non zero section $\tau\in H^0(\PP^2,F)$, induces the exact sequence 
\begin{equation}\label{eq:exs}
 0\longrightarrow
    {\OO_{\PP^2}}\stackrel{\cdot\tau}{\longrightarrow} 
    F \longrightarrow \mathcal{Q}\longrightarrow 0\quad\mbox{with}\quad \mathcal{Q}= F/\OO_{\PP^2}
\end{equation}

The sequence (\ref{eq:exs}) is a free resolution of the torsion free sheaf $\mathcal{Q}$. Therefore, $\mathcal{Q}\simeq \mathcal{J}_{\Sigma}$ an ideal sheaf up to twist, for some codimension two scheme $\Sigma$.

The second Chern class $c_2(F)=deg(\Sigma)=0$, we conclude that $\Sigma=\emptyset,$ and $\mathcal{Q}$ is locally free. Since $c_1(\mathcal{Q})=c_1(F)$ we get $\mathcal{Q}\simeq \OO_{\PP^2}(c_1(F))$. 

So that, the vector bundle $F$ is an extension by holomorphic line bundles, namely 
\[
0\longrightarrow \OO_{\PP^2}\stackrel{\cdot\tau}{\longrightarrow} F\rightarrow \OO_{\PP^2}(c_1(F))\longrightarrow 0.
\] 
Hence, following \cite[p.~15]{okonek}, $F$ splits and $F\simeq \OO_{\PP^2}\oplus\OO_{\PP^2}(c_1(F))$ as claimed.
\end{proof}

\

\begin{example} Consider in the affine space $\mathbb{A}_0=\{(1:x_1:x_2:x_3)\}\subset\PP^3$ the family of 1--forms
\[
\omega=x_3^2(\lambda_1 x_2 dx_1+\lambda_2 x_1 dx_2)+x_1x_2(1+\lambda_3x_3)dx_3,\qquad \lambda_1\lambda_2\lambda_3(\lambda_1-\lambda_2)\neq 0.
\]

Let $\pi(y_0:y_1:y_2:y_3)=(x_1,x_2,x_3),\quad x_i=y_i/y_0$ with $i=1,2,3.$ Then 
\[
\Omega=y_0^5\pi^{\ast}\omega= A_0(y)dy_0+A_1(y)dy_1+A_2(y)dy_2+A_3(y)dy_3,
\]
where
\[
\begin{array}{ccl}
A_0(y_0,y_1,y_2,y_3)& = & \lambda_0y_1y_2y_3^2-y_0y_1y_2y_3,\quad \lambda_0=-(
\lambda_1+\lambda_2+\lambda_3) \\
A_1(y_0,y_1,y_2,y_3)& = & \lambda_1 y_0y_2y_3^2  \\
A_2(y_0,y_1,y_2,y_3)& = & \lambda_2 y_0y_1y_3^2  \\
A_3(y_0,y_1,y_2,y_3)& = & y_0^2y_1y_2+\lambda_3 y_0y_1y_2y_3
\end{array}
\]

We recall from \cite[Theorem 3.1]{Jardim-etal} the following formulas
\begin{align*}
c_1(T_\Omega) &= 2 - d\\
 c_2(T_\Omega) &= d^2 + 2 - degree(S_2(\Omega))
\end{align*}
where $d$ denotes the degree of the foliation defined by $\Omega$.

\

We consider two cases:

If $\lambda_0=0$ the 1--form $\Omega$ is singular along $y_0=0$. After dividing by $y_0$ we get $\Omega'=\Omega/y_0\in H^0(\Omega_{\PP^3}^1(4))$
which has the following properties.

The singular set are the 4 lines of Kupka type and its multiplicity $\mu$ as listed below
\[
\begin{array}{lll}
&(K_{03}=\{y_0=y_3=0\},(y_0+\lambda_3 y_3)dy_3-y_3dy_0)\quad &\mu(K_{03})=1 \\
&(K_{12}=\{y_1=y_2=0\},\lambda_1y_2dy_1+\lambda_2y_1dy_2)\quad &\mu(K_{12})=1 \\
&(K_{13}=\{y_1=y_3=0\},\lambda_1y_3^2dy_1+y_1(1+\lambda_3 y_3)dy_3)\quad &\mu(K_{13})=2 \\
&(K_{23}=\{y_2=y_3=0\},\lambda_2y_3^2dy_2+y_2(1+\lambda_3 y_3)dy_3)\quad &\mu(K_{23})=2
\end{array}
\]

The singular set has pure codimension two and degree 6. The foliation 
has degree 2, therefore the tangent sheaf $T_{\Omega'}$ is locally free, see Remark \ref{rem-1},  with Chern classes $(c_1(T_{\Omega}),c_2(T_{\Omega}))=(0,0)$ then, by \cite[Theorem 2.3.2, p.22]{okonek} and by Lemma \ref{Lemma},  $T_{\Omega'}=\OO_{\PP^3}\oplus\OO_{\PP^3}$.

On the other hand, if $\lambda_0\neq0$, the form $\Omega\in H^0(\Omega_{\PP^3}^1(5))$ and  its singular set has 6 components 
$\{y_i=y_j=0\}_{0\leq i<j\leq 3}$ and 5 of them, with the exception of the line $L_{03}=\{y_0=y_3=0\}$, are of Kupka type.
\[
\begin{array}{lll}
&(K_{01}=\{y_0=y_1=0\},y_1(\lambda_0-y_0)dy_0+\lambda_1y_0dy_1)\quad 
&\mu(K_{01})=1 \\
&(K_{02}=\{y_0=y_2=0\},y_2(\lambda_0-y_0)dy_0+\lambda_2y_0dy_2)\quad
&\mu(K_{02})=1 \\
&(L_{03}=\{y_0=y_3=0\},(\lambda_0 y_3^2-y_0y_3)dy_0+(\lambda_3y_0y_3+y_0^2)dy_3)\quad &\mu(L_{03})=4 \\
&(K_{12}=\{y_1=y_2=0\},\lambda_1y_2dy_1+\lambda_2y_1dy_2)\quad &\mu(K_{12})=1 \\
&(K_{13}=\{y_1=y_3=0\},\lambda_1y_3^2dy_1+y_1(1+\lambda_3 y_3)dy_3)\quad &\mu(K_{13})=2 \\
&(K_{23}=\{y_2=y_3=0\},\lambda_2y_3^2dy_2+y_2(1+\lambda_3 y_3)dy_3)\quad &\mu(K_{23})=2
\end{array}
\]

Again, by the Remark \ref{rem-1}, $T_{\Omega}$ is locally free and has Chern classes \[(c_1(T_{\Omega}),c_2(T_{\Omega}))=(-1,0).\] Therefore by \cite[Theorem 2.3.2, p.22]{okonek} and the Lemma \ref{Lemma}, we get that
 $T_{\Omega}\simeq \OO_{\PP^3}\oplus\OO_{\PP^3}(-1)$
\end{example}
\
\begin{remark}\label{rem-1}
In \cite[Theorem 3.2, p.~848]{gir-pancoll} the authors show that, in $\PP^3$, $T_\omega$ is locally free $\iff$ $\sing(\omega)$ is a curve. Then, we can weaken our hypothesis in Theorem \ref{SplitTan}  by the following way: 

Let $\omega\in\FF(\PP^3,e)$ such that $J=\sqrt{J}$ and $\LL=\emptyset$, then $T_\omega$ splits.
\end{remark}

Using the formulation of the previous remark, we can give a positive answer to the question posed in \cite[Problem 2, p.~989]{omegar2} which says: `Is it true that $T_\omega$ splits for any generalized Kupka (GK) foliation $\omega$ on $\PP^3$?'

For that, let us first recall what the authors mean by a GK foliation, from \cite[Definition 1, p.~988]{omegar2} we have that: Let $\omega$ be an integrable 1-form defined in a neighborhood of $p\in\CC^3$. We say that $p$ is a GK singularity of $\omega$ if $\omega(p) = 0$ and either $d\omega(p)\neq 0$ or $p$ is an isolated zero of $d\omega$.

By Malgrange's result \cite[Th\'eor\`eme (0.1), p.~163]{m-f1} we know that GK points can be found only in codimension 2 components of the singular locus of $\omega$, as points in the closure of the set $\{p\in\PP^n\ : \ \omega(p)=0 \text{ and } d\omega(p)\neq 0 \}$ which coincides with $\KK$, under the hypothesis that we are considering $J=\sqrt{J}$. This way, asking $\omega$ to be GK is the same as asking $\omega$ to have $\LL=\emptyset$ and, together with Remark \ref{rem-1}, we have the question answered. This allow us to conclude the following theorem:

\begin{gk}\label{gk-sing}
Let $\omega\in\FF(\PP^3,e)$ be a foliation with GK singularities and with $K=\sqrt{K}$ or equivalently without components with saddle--node transversal type. Then, the tangent sheaf is locally free and splits. 
\end{gk}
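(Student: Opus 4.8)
The plan is to reduce the two standing hypotheses of this statement---GK singularities and $K=\sqrt{K}$---to the pair of hypotheses $J=\sqrt{J}$ and $\LL=\emptyset$ required by the strengthened form of Theorem \ref{SplitTan} recorded in Remark \ref{rem-1}, and then to invoke that result directly. Thus the whole content of the argument is a translation of the local, differential-geometric information carried by the GK condition into the global, scheme-theoretic language of the ideals $J$, $K$ and $L$.

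First I would show that the GK hypothesis forces $\LL=\emptyset$. The key input is Malgrange's singular Frobenius theorem \cite{m-f1}: at a singular point $p$ where $\sing(\omega)$ has codimension $\geq 3$ the germ of $\omega$ admits a holomorphic first integral, so that $\omega=g\,df$ and hence $d\omega=dg\wedge df$ near $p$. Were $p$ an isolated singular point in $\PP^3$, this would display $d\omega$ either as identically zero or as a $2$-form whose zero locus is generically of codimension two through $p$; in neither case can $p$ be an isolated zero of $d\omega$, contradicting the GK condition. Hence $\sing(\omega)$ has no codimension-three component and is pure of codimension two. The same normal form excludes a non-Kupka curve component: along such a curve $d\omega$ would vanish on a whole $1$-dimensional set, so its points would be non-isolated zeros of $d\omega$, again violating GK. Therefore the entire singular locus is Kupka up to closure, $\sing(\omega)=\overline{\KK_{set}}$ as sets, and $\LL=\emptyset$.

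Next I would promote $K=\sqrt{K}$ to $J=\sqrt{J}$. By the transversal-type discussion preceding the statement, $K=\sqrt{K}$ is exactly the absence of saddle--node transversal type, so along every (Kupka) component the transversal $1$-form $\eta$ has nondegenerate linear part with eigenvalues $1$ and $\lambda\neq 0$, and its coefficients $(A,B)$ cut out a reduced codimension-two scheme. Since the previous step shows that all of $\sing(\omega)$ is Kupka, on this locus $\II(d\omega)$ is locally the unit ideal, so $K=(J:\II(d\omega))$ agrees with $J$ there; reducedness of the Kupka scheme then upgrades to $J=\sqrt{J}$. With $J=\sqrt{J}$ and $\LL=\emptyset$ in hand, Remark \ref{rem-1} applies: local freeness of $T_\omega$ follows from \cite{gir-pancoll} because $\sing(\omega)$ is now a curve, and the splitting follows from Theorem \ref{SplitTan}.

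I expect the main obstacle to be the first step. The delicate points are making the Malgrange argument genuinely rule out isolated zeros of $d\omega$ sitting off the generic Kupka locus, and keeping the set-theoretic conclusion $\LL_{set}=\emptyset$ in step with the scheme-theoretic one, namely that the saturation $L=(J:K^{\infty})$ becomes irrelevant, so that the hypotheses of Theorem \ref{SplitTan} are literally, and not merely set-theoretically, satisfied.
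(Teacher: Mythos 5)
Your overall route is the same as the paper's: use Malgrange's theorem to show that the GK hypothesis confines the singular points to codimension-two components lying in the closure of the Kupka set, conclude $\LL=\emptyset$, translate the absence of saddle--node transversal type into the reducedness hypothesis, and then invoke Remark \ref{rem-1} (the strengthened Theorem \ref{SplitTan}). The paper's own proof is precisely the informal paragraph preceding the statement, and your first and third steps reproduce it with more detail; in particular your determinantal argument that $d\omega=dg\wedge df$ cannot have an isolated zero is the content hiding behind the paper's one-line appeal to \cite{m-f1}.

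The one place where you go beyond the paper --- promoting $K=\sqrt{K}$ to $J=\sqrt{J}$ --- contains a genuine slip. Your first step gives $\sing(\omega)=\overline{\KK_{set}}$ only as sets, not $\sing(\omega)=\KK_{set}$: the GK condition expressly allows points where $d\omega$ has an isolated zero, and such points do occur under the hypotheses of the theorem (for a generic logarithmic form $\omega=x_0x_1x_2x_3\sum_i\lambda_i\,dx_i/x_i$ with $\sum_i\lambda_i=0$, the four points where three Kupka lines meet are isolated zeros of $d\omega$, and there is no saddle--node component). At such a point $\pp$ one has $d\omega(\pp)=0$, so $\II(d\omega)_\pp$ is \emph{not} the unit ideal, the identity $K_\pp=(J_\pp:\II(d\omega)_\pp)=J_\pp$ breaks down, and reducedness of $J$ at $\pp$ does not follow from $K=\sqrt{K}$. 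Thus your argument yields $J_\pp=\sqrt{J_\pp}$ only at honest Kupka points, and $J=\sqrt{J}$ --- which Remark \ref{rem-1} genuinely requires, through Theorems \ref{technical1} and \ref{teo4} --- remains unproven at finitely many points; note also that your own set-versus-scheme worry reduces to the same issue, since \cite[Lemma 4.6]{mmq} converts $\sing(\omega)=\overline{\KK_{set}}$ into the scheme-theoretic $\LL=\emptyset$ only under $J=\sqrt{J}$. In fairness, the paper elides exactly the same point: its hypothesis is phrased as ``$K=\sqrt{K}$, or equivalently no saddle--node transversal type,'' while the argument it invokes runs ``under the hypothesis \dots $J=\sqrt{J}$,'' and the equivalence of these conditions at the non-Kupka GK points is nowhere established there. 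So you have faithfully reconstructed the paper's proof, including its weakest joint; the part you added to close that joint is the part that does not yet work.
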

\
\begin{remark}
\begin{enumerate}
\item Using the same argument as before, we can generalize \linebreak\cite[Lemma 2, p.~1009]{omegar2}, in the following way: Let $\omega\in\FF(\PP^n,e)$ and suppose that there exists a 3-plane $E$ such that $\omega|_{E}$ is a GK foliation without saddle--node transversal type, then $T_\omega|_E$ splits. It follows that $\omega=\pi^{\ast}\omega|_E$, where $\pi:\PP^n\dashrightarrow \PP^3$ is a linear projection. 

\item Since the tangent sheaf splits, in particular $H^1(T_\omega)\simeq H^2(T_{\omega})=0$. It follows from \cite[equation 0.3, p. 52]{GM} that the space of the infinitesimal deformations of the foliation, coincides with the space of deformations of its transversal structure.   
\end{enumerate}
\end{remark}

\

Another consequence of the Theorem \ref{teo4}, is given by the following property related to the connectedness of $\KK$. With this statement we give a partial answer to the question posted in \cite[2.3. Lieu singulier (2), p.~657]{cerveau}, which says: `Let $\omega\in\FF(\PP^3,e)$. Is the union of the irreducible components of codimension 2 of $\sing(\omega)$ connected?'

\begin{sappl}\label{K-connected}
 Let $\omega\in\mathcal{U}\subset\FF(\PP^n,e)$, then $\KK$ is connected.
\end{sappl}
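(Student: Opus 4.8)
The plan is to deduce connectedness from the vanishing of the first cohomology of the unfolding ideal sheaf established in Theorem \ref{teoI}, and then to transport the conclusion from the unfolding scheme to $\KK$ by means of the equality of radicals in Theorem \ref{teo1}. Concretely, I would set $Y := \proj(S/I)$ and let $\II$ be its ideal sheaf; since $I$ is saturated by Proposition \ref{propI}, $\II$ is genuinely the ideal sheaf of $Y$ and $H^0_*(\II)=I$. The standard device is the structure sequence
\[
0 \to \II \to \OO_{\PP^n} \to \OO_Y \to 0,
\]
whose long exact cohomology sequence reads
\[
0 \to H^0(\II) \to H^0(\OO_{\PP^n}) \to H^0(\OO_Y) \to H^1(\II) \to \cdots.
\]

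Next I would evaluate the two outer terms. By Theorem \ref{teoI} we have $H^1_*(\II)=0$, hence in particular $H^1(\II)=0$; I stress that this vanishing holds for every $n\geq 3$ and not only for $n=3$, which is exactly what permits the statement to be phrased over $\PP^n$. For the leftmost term, $H^0(\II)$ is the degree-zero part of $I$; by Remark \ref{1notinI} the variety $V(I)$ has codimension $\geq 2$, so $I$ is a proper ideal, $1\notin I$, and therefore $H^0(\II)=0$ (if instead $V(I)=\emptyset$ then $\KK$ is empty and the statement is vacuous). The exact sequence then collapses to an isomorphism $H^0(\OO_Y)\cong H^0(\OO_{\PP^n})=\CC$. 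Since a splitting $Y=Y_1\sqcup Y_2$ into two nonempty closed subschemes would force $\dim_\CC H^0(\OO_Y)\geq 2$, the one-dimensionality of $H^0(\OO_Y)$ shows that $Y$ is connected.

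Finally I would transfer connectedness to $\KK$. Because $\omega\in\UU$, Theorem \ref{teo1} yields $\sqrt{I}=\sqrt{K}$, so the schemes $\proj(S/I)$ and $\KK=\proj(S/K)$ have the same underlying topological space. As connectedness depends only on this space, the connectedness of $Y$ immediately gives the connectedness of $\KK$.

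I expect the main obstacle to be conceptual rather than computational: the cohomological input $H^1(\II)=0$ concerns the unfolding scheme, not $\KK$ itself, and these two schemes differ in general, agreeing only up to radical. The crucial point is thus to recognize that connectedness is insensitive to the nilpotent or embedded discrepancy between $I$ and $K$, and to invoke the hypothesis $\omega\in\UU$ precisely in order to identify their radicals via Theorem \ref{teo1}. One must also be careful that the membership in $\UU$ is used only at this last step, since the vanishing $H^1_*(\II)=0$ is available for all of $\FF(\PP^n,e)$.
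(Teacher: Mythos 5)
Your proof is correct and takes essentially the same route as the paper's: the structure sequence $0\to\II\to\OO_{\PP^n}\to\OO_{\PP^n}/\II\to 0$ together with the vanishing $H^1_*(\II)=0$ of Theorem \ref{teoI} gives connectedness of $\proj(S/I)$, which is then transferred to $\KK$ via the equality $\sqrt{I}=\sqrt{K}$ of Theorem \ref{teo1}, exactly as in the paper. Your only addition, observing $H^0(\II)=0$ so that $H^0(\OO_{\PP^n})\to H^0(\OO_{\PP^n}/\II)$ is an isomorphism rather than merely a surjection, is a harmless sharpening of the same argument.
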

\begin{proof}
By considering the short exact sequence
\[
\xymatrix{
0 \ar[r] & \II \ar[r]^{i} & \OO_{\PP^n} \ar[r]^{\pi} & \OO_{\PP^n}/\II \ar[r] & 0
}
\]
and its exact sequence of cohomology
\[
\xymatrix{
0 \ar[r] & H^0(\II) \ar[r]^{i} & H^0(\OO_{\PP^n}) \ar[r]^{\pi} & H^0(\OO_{\PP^n}/\II) \ar[r] & \underset{\simeq 0}{H^1(\II)}
}
\]
from where we have that the surjectivity of the application $\pi:H^0(\OO_{\PP^n}) \to H^0(\OO_{\PP^n}/\II)$ implies the connectedness of $Proj(\OO_{\PP^n}/\II)$.

By our hypothesis of $\omega\in\mathcal{U}$ we have that $\sqrt{I}=\sqrt{K}$, by Theorem \ref{teo1}, then if $Proj(S/I)\simeq Proj(\OO_{\PP^n}/\II)$ is connected then $Proj(S/K)= \KK$ is connected. 
\end{proof}

\begin{remark}
By the above corollary one can remove the hypothesis of connectedness in \cite[Theorem 3.3 and 3.4, pp.~1226-1227]{omegar-marcio}, since the transversal type can not be saddle--node.
\end{remark}

\bigskip

Regarding the next question posed in \emph{loc. cit.}, see \cite[2.4. Lieu singulier (3), p.~657]{cerveau}, which says: `Let $\gamma$ be an irreducible curve in $\PP^3$. Is there a foliation $\omega$ such that $\sing(\omega)=\gamma$?', we can say the following.

\medskip
Let $\gamma\subset \PP^3$ be a reduced and irreducible curve.
If there is a foliation $\omega$ such that $\sing(\omega)=\gamma$ then in particular $\sing(\omega)$ is reduced, so $\KK\neq\emptyset$ and $\sing(\omega)=\KK$.
Therefore $\gamma$ is aCM by Theorem \ref{teo4}.
The ideal of the curve $\gamma\subset\PP^3$ is then generated by the coefficients of $\omega=\sum_{i=0}^3f_i(x)dx_i$. So we have an aCM ideal with $4$ generators $I(\gamma)=(f_0,f_1,f_2,f_3)$.
Suppose this presentation is redundant, so there is a non-trivial linear combination $\sum a_if_i(x)=0$ with $a_i\in\CC$.
Then we have a (twisted) vector field $X=\sum a_i\frac{\partial}{\partial x_i}\in H^0(\PP^3,T\PP^3(-1))$ such that $i_X \omega=0$.
This implies $\omega$ is the pull-back of a foliation on $\PP^2$ under a linear projection $\PP^3\to\PP^2$.
So $\sing(\omega)$ is a union of lines, contradicting the irreducibility of $\gamma$.
Then we have a non-redundant presentation of $I(\gamma)$, as $\gamma$ is aCM of codimension $2$, by \cite[Proposition 8.7, p.~63]{hart-def} this must be of the following form:
\[
0\to \OO_{\PP^3}(a_0)\oplus\OO_{\PP^3}(a_1)\oplus\OO_{\PP^3}(a_2)\to\OO_{\PP^3}(1-e)^{\oplus 4} \to I(\gamma)\to 0.
\]

The projectivity of $\omega$ gives $i_R \omega=0$, this in turn gives a non-trivial relation $\sum_{i=0}^3x_if_i=0$ of degree $1$. 
So $\gamma$ necessarily must be an aCM curve with minimal presentation
\[
0\to \OO_{\PP^3}(-e)\oplus\OO_{\PP^3}(a_1)\oplus\OO_{\PP^3}(a_2)\to\OO_{\PP^3}(1-e)^{\oplus 4} \to I(\gamma)\to 0.
\]
We recover as a particular case the example of \cite[1.4 An example, p.~101]{cerveau-linsneto}, stating that the twisted cubic cannot be the singular set of a foliation, as the minimal presentation of this curve has three generators with two relations.

\medskip
Conversely, consider an aCM curve with presentation
\[
0\to \OO_{\PP^3}(-e)\oplus\OO_{\PP^3}(a_1)\oplus\OO_{\PP^3}(a_2)\to\OO_{\PP^3}(1-e)^{\oplus 4} \to I(\gamma)\to 0.
\]
Suppose we have a relation of degree $1$ that is generic in the sense that, writing it as $\sum_{i=0}^3 l_i(x)f_i(x)=0$ with $l_i$, $(i=0,\dots,3)$ linear forms, the map $x\mapsto (l_0(x):\dots:l_3(x))$ is an isomorphism of $\PP^3$.
We can take $y_i=l_i(x)$ as coordinates and construct the form $\sum f(y)dy_i$.
This degree $e$ form will be projective as $\sum y_i\frac{\partial}{\partial y_i}$ annihilates $\omega$.
The distribution defined by this $\omega$ is split and generated by the two twisted vector fields $X_1=\sum b_i(y)\frac{\partial}{\partial y_i}$ and $X_2=\sum c_i(y)\frac{\partial}{\partial y_i}$, where $\sum b_if_i=0$ and $\sum c_i f_i=0$ are the other two relations in the presentation of $I(\gamma)$.
However, this distribution may not be involutive.

\medskip
By the main theorem of \cite[Th\'eor\`eme 1, p.~424]{elling} there is an open subset of the Hilbert scheme parameterizing aCM curves in $\PP^3$ whose homogeneous ideal is generated by four polynomials of degree $e-1$ with three relations of degree $1$, $a_1-e$ and $a_2-e$ respectively.
This open set is dominated by the rational variety whose points are $3\times 4$ matrices of homogeneous polynomials with columns of degree $1$, $a_1-e$ and $a_2-e$ respectively, see \cite[Chapter 2, 8, p.~58]{hart-def}.
By restricting this open set further to one where we have a generic relation of degree $1$ we get a scheme parameterizing projective forms with split tangent distribution. On this open set we define a closed subscheme by  imposing the integrability of the form.

We therefore have: 
\begin{proposition}
There is a locally closed subscheme of the Hilbert scheme of aCM curves in $\PP^3$ with four generators of degree $e-1$, one relation of degree $1$, parameterizing curves $\gamma$ that are the singular set of a foliation defined by a form $\omega\in H^0(\PP^3,\Omega^1_{\PP^3}(e))$.
\end{proposition}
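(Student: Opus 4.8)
The plan is to assemble the desired parameter space inside the Hilbert scheme in three layers---two open conditions and one closed condition---so that the resulting locus is locally closed, and then to check that membership in this locus is precisely the property of being the singular scheme of an integrable form. The forward implication is already in hand from the discussion above: if $\gamma=\sing(\omega)$ for some $\omega\in\FF(\PP^3,e)$ then $\gamma$ is aCM, reduced, with $I(\gamma)=(f_0,\dots,f_3)$ generated by the coefficients of $\omega$, and the projectivity relation $\sum_i x_if_i=0$ supplies a linear syzygy; hence $\gamma$ carries the Hilbert--Burch resolution displayed above. So the entire locus we must describe sits inside the family of aCM curves with that fixed resolution shape.

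First I would fix the numerical data: the Hilbert polynomial of $\gamma$ together with the resolution $0\to \OO_{\PP^3}(-e)\oplus\OO_{\PP^3}(a_1)\oplus\OO_{\PP^3}(a_2)\to\OO_{\PP^3}(1-e)^{\oplus 4}\to I(\gamma)\to 0$, and invoke Ellingsrud's theorem \cite{elling} to realize the aCM curves with these invariants as an open subset $U$ of the Hilbert scheme. By the Hilbert--Burch theorem this open set is dominated by the rational variety $\mathcal{M}$ of $3\times 4$ matrices of homogeneous forms with column degrees $1,\,a_1-e,\,a_2-e$, whose maximal minors recover $f_0,\dots,f_3$ up to sign, as recalled in \cite[Chapter 2, 8]{hart-def}. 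The degree-$1$ column is exactly the linear syzygy; the further requirement that this syzygy be \emph{generic}, i.e. that, writing it as $\sum_i l_i f_i=0$, the assignment $x\mapsto(l_0(x):\dots:l_3(x))$ is an isomorphism of $\PP^3$, is an open condition on $\mathcal{M}$ and hence cuts out an open subscheme $U'\subseteq U$.

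Second, over $U'$ I would build a relative family of projective $1$-forms. For a curve in $U'$, after the linear change of coordinates $y_i=l_i(x)$ the four maximal minors $f_i$ assemble into $\omega=\sum_i f_i(y)\,dy_i\in H^0(\PP^3,\Omega^1_{\PP^3}(e))$, which is projective because the radial field in the $y$-coordinates annihilates it, and whose tangent distribution is split, generated by the two twisted vector fields coming from the remaining two syzygies. The integrability condition $\omega\wedge d\omega=0\in H^0(\PP^3,\Omega^3_{\PP^3}(2e))$ is a system of polynomial equations in the entries of the Hilbert--Burch matrix, hence defines a closed subscheme $Z\subseteq U'$. Since $J(\omega)=(f_0,\dots,f_3)=I(\gamma)$ by construction, every $\omega$ produced this way has $\sing(\omega)=\gamma$ as a scheme; and by Theorem \ref{SplitTan} together with Remark \ref{rem-1} an integrable $\omega$ with reduced singular curve must have split tangent sheaf, so the split forms above exhaust the integrable ones with $\sing(\omega)=\gamma$. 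Thus $Z$, being the intersection of the open locus $U'$ with the closed integrability locus, is the claimed locally closed subscheme of the Hilbert scheme.

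The main obstacle is the relative, scheme-theoretic nature of the second step: the Hilbert--Burch presentation is canonical only up to the action of the automorphism groups of the two free modules, so the forms $\omega$ are not literally functions on $U'$ but on the dominating matrix variety $\mathcal{M}$, and one must check that the integrability locus descends to a well-defined closed subscheme of $U'$---equivalently, that it is invariant under the change-of-basis action and under the $\mathrm{PGL}_4$ reparameterization hidden in the choice of the $l_i$. Handling this equivariance, and confirming that the scheme structure cut out by $\omega\wedge d\omega=0$ is independent of the auxiliary choices, is where the real work lies; the remaining verifications---openness of the two conditions, the identity $J(\omega)=I(\gamma)$, and the closedness of integrability---are routine.
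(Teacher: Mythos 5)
Your construction coincides with the paper's proof: an open subset of the Hilbert scheme supplied by Ellingsrud's theorem, a further open condition that the degree-one syzygy be generic, and then the closed condition $\omega\wedge d\omega=0$, yielding the locally closed subscheme. The equivariance problem you single out as ``where the real work lies'' is in fact a one-line check that the paper leaves implicit rather than a genuine obstacle: changing the generators by $g\in\mathrm{GL}_4(\CC)$ forces the linear syzygy to transform as $l\mapsto lg^{-1}$, so the form $\omega=\sum_i f_i\,dl_i$ is literally unchanged, while rescaling the syzygy only rescales $\omega$; hence the integrability locus is independent of all auxiliary choices and descends (by choosing generators locally on the base and gluing) to a well-defined closed subscheme of the open set.
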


We can now give an alternative algebraic proof of the fact that if $\KK$ is compact then it is a complete intersection, see \cite[Theorem 1, p.~3]{omegar2016} or \cite[Proposition 2.4]{CCF}. The proof works in all cases.

\

Recall that if $\KK_0$ is a compact, connected component of the Kupka set of a foliation, if the normal bundle of $\KK_0$ has not vanishing first Chern class, then $\KK_0$ is reduced (the transversal type can not be a saddle node).  

\begin{TAppl}\label{complete-intersection}
 Let $\omega\in\FF(\PP^n,e)$ be such that $J=\sqrt{J}$, $\KK\cap\LL=\emptyset$ and $\KK=\KK_{set}$, then $\KK$ is a complete intersection.
\end{TAppl}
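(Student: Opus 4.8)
The plan is to realize $\KK$ as the zero scheme of a section of a rank two vector bundle on $\PP^n$ and then to prove that this bundle splits; by the Hartshorne--Serre correspondence the splitting is equivalent to $\KK$ being a complete intersection, and the splitting itself will be obtained by descending to $\PP^3$, where the Main Theorem is available.

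First I would fix the geometric picture. Since $J=\sqrt{J}$ and $\KK\cap\LL=\emptyset$, Theorem~\ref{technical1} gives $I=K$, and by Proposition~\ref{propI} this common ideal is saturated; hence $K$ is the saturated homogeneous ideal of $\KK$ and $\mathscr{J}=\mathcal{I}_{\KK}$. The hypothesis $\KK=\KK_{set}$ means that $d\omega$ vanishes at no point of $\KK$, so around each of its points $\omega$ has the Kupka product form recalled in Section~\ref{results}; in particular $\KK$ is a \emph{smooth}, reduced subvariety of pure codimension two and $T_\omega$ is locally free. Moreover $d\omega|_{\KK}$ is a nowhere-vanishing section of $\wedge^2 N_{\KK}^{\vee}\otimes\OO_{\PP^n}(e)|_{\KK}$, whence $\det N_{\KK}\cong\OO_{\KK}(e)$ and $\KK$ is subcanonical with $\omega_{\KK}\cong\OO_{\KK}(e-n-1)$.

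Next I would apply the Serre construction. Because $\KK$ is a smooth subcanonical subvariety of codimension two and $H^2(\OO_{\PP^n}(-e))=0$ for $n\geq 3$, there is a rank two vector bundle $\mathcal{E}$ on $\PP^n$ fitting in an extension
\[
0\to\OO_{\PP^n}\to\mathcal{E}\to\mathcal{I}_{\KK}(e)\to 0,
\]
whose defining section vanishes exactly along $\KK$, with $c_1(\mathcal{E})=e$ and $c_2(\mathcal{E})=\deg\KK$. By the Hartshorne--Serre correspondence, $\KK$ is the complete intersection of two hypersurfaces if and only if $\mathcal{E}\simeq\OO_{\PP^n}(a)\oplus\OO_{\PP^n}(b)$, so it suffices to prove that $\mathcal{E}$ splits. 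To this end I would cut with a general linear $\PP^3\subset\PP^n$: the restricted form $\omega|_{\PP^3}$ lies in $\FF(\PP^3,e)$ and, by transversality, its singular scheme restricts to the reduced curve $\KK\cap\PP^3$, all of whose points remain Kupka, so $\omega|_{\PP^3}$ satisfies the hypotheses of the Main Theorem~\ref{teo4} and $\KK\cap\PP^3$ is aCM, i.e. $H^1_*(\mathcal{I}_{\KK\cap\PP^3})=0$. Restricting the extension above to this $\PP^3$ stays exact, so $H^1_*(\mathcal{E}|_{\PP^3})=0$ and Theorem~\ref{spl2} forces $\mathcal{E}|_{\PP^3}$ to split. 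Restricting once more to a plane $\PP^2\subset\PP^3$ and invoking Theorem~\ref{spl1}, the bundle $\mathcal{E}$ splits on all of $\PP^n$, and the complete intersection property follows.

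The routine points are the cohomology bookkeeping in the Serre extension and the splitting bootstrap through Theorems~\ref{spl2} and~\ref{spl1}; note that working on $\PP^3$ is what lets us get away with only the vanishing of $H^1_*$, which is exactly what Theorem~\ref{teoI} and Theorem~\ref{teo4} supply. The main obstacle I expect is the transversality/Bertini step: one must check that a general $\PP^3$ meets $\KK$ so that $\omega|_{\PP^3}$ is again an element of $\FF(\PP^3,e)$ with reduced, everywhere-Kupka singular locus (so that Theorem~\ref{teo4} genuinely applies) and that $\mathcal{I}_{\KK}$ restricts to $\mathcal{I}_{\KK\cap\PP^3}$ with no extra Tor terms. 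Verifying the precise hypotheses of the Serre construction in the twist by $\OO_{\PP^n}(e)$ is the other place that requires care.
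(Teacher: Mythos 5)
Your proposal is correct and follows essentially the same route as the paper's proof: both realize $\KK$ as the zero scheme of a section of a rank two bundle via the Serre construction in the extension $0\to\OO_{\PP^n}\to V\to \mathcal{I}_{\KK}(e)\to 0$, reduce to a general linear $\PP^3$ where the Main Theorem gives $H^1_*(\mathcal{I}_{\KK\cap\PP^3})=0$ and hence $H^1_*(V|_{\PP^3})=0$, conclude splitting on $\PP^3$ by Theorem \ref{spl2}, and propagate the splitting to $\PP^n$ by Theorem \ref{spl1}. The only cosmetic difference is that you spell out the subcanonical condition ($\det N_{\KK}\cong\OO_{\KK}(e)$) needed for the Serre construction, which the paper instead cites from the literature.
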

\begin{proof} 
Since $\KK=\KK_{set}$ the Kupka set is compact. Then following \cite[1.2, p.~5]{omegar2016} by a Serre construction, the normal bundle $N_\KK$ of $\KK$ in $\PP^n$, extends to a rank two holomorphic vector bundle $V$ of $\PP^n$, having a holomorphic section $\sigma$, vanishing on $\KK$, and defining the following exact sequence
 \[
 \xymatrix{
  0 \ar[r] & \OO_{\PP^n} \ar[r]^{\sigma} & V \ar[r] & \II_{\KK}(e) \ar[r] & 0\ ,
  }
 \]
where with $\II_\KK(e)$ we are denoting the ideal sheaf of the Kupka scheme defined by $\KK$.
Moreover $\KK$ is a complete intersection if and only if $V$ splits.

Such exact sequence, has a long exact sequence in cohomology given by
\[
\cdots\rightarrow H^i(\OO_{\PP^n}(e'))\rightarrow H^i(V(e'))
\rightarrow H^i(\II_{\KK}(e+e'))\rightarrow H^{i+1}(\OO_{\PP^n}(e'))\rightarrow\cdots
\]

If $n=3$, we see that $H^1(\PP^3,V(e'))=0$ for every $e'\in\ZZ$, by Theorem \ref{spl2}, the bundle $V$ splits and $\KK$ is a complete intersection.

For the general case, take a linear embedding $\ell:\PP^3\hookrightarrow \PP^n$ in a general position with respect to the foliation. 

In this case, $\ell^{\ast}\omega\in\FF(\PP^3,e)$ and $\ell^{-1}\KK=\KK_{\ell^{\ast}\omega}$. The vector bundle 
$\ell^{\ast}V=V|_{\PP^3}$ splits and then $V$ also splits, then $\KK$ is a complete intersection.
\end{proof}

\section{Obstructions for integrability}\label{aci}

Along this section we will state two results that give obstructions to the condition of integrability of $\omega\in \mathscr{D}(\PP^n,e)$.

\begin{lemma}
 Let $\omega\in H^0(\Omega^1_{\PP^n}(e))$, then $I(\omega)\neq 0$ if and only if $\omega$ is integrable. And, in case $\omega$ is integrable, we have $J(\omega)\subset I(\omega)$.
\end{lemma}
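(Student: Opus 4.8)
The plan is to treat both implications by elementary manipulations with the interior and wedge products, exploiting that $S=\CC[x_0,\dots,x_n]$ is an integral domain and that the modules $\Omega^p_S$ of polynomial $p$-forms are free, hence torsion-free. I would first dispose of the degenerate case $\omega=0$, where $d\omega=0$, the defining condition $h\,d\omega=\omega\wedge\eta$ holds for every $h$, so $I(\omega)=S$, and integrability is automatic; from now on I assume $\omega\neq0$.

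For the implication that integrability gives $I(\omega)\neq0$, together with the containment $J(\omega)\subset I(\omega)$, I would start from $\omega\wedge d\omega=0$ and contract with an arbitrary $X\in T_S$. Since the interior product is an antiderivation of degree $-1$,
\[
0=i_X(\omega\wedge d\omega)=i_X(\omega)\,d\omega-\omega\wedge i_X(d\omega),
\]
so that $i_X(\omega)\,d\omega=\omega\wedge i_X(d\omega)$. Taking $h=i_X(\omega)$ and $\eta=i_X(d\omega)$ shows $i_X(\omega)\in I(\omega)$; as $X$ ranges over $T_S$ this is exactly $J(\omega)\subset I(\omega)$, the containment already announced in Remark \ref{1notinI}. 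Because $\omega\neq0$, contracting with the coordinate fields $\frac{\partial}{\partial x_i}$ recovers the nonzero coefficients of $\omega$ as elements of $J(\omega)$, so $J(\omega)\neq0$ and hence $I(\omega)\supseteq J(\omega)\neq0$.

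For the converse, suppose $I(\omega)\neq0$ and choose $0\neq h\in I(\omega)$, so that $h\,d\omega=\omega\wedge\eta$ for some $\eta\in\Omega^1_S$. Wedging both sides with $\omega$ annihilates the right-hand side, giving
\[
h\,(\omega\wedge d\omega)=\omega\wedge\omega\wedge\eta=0 \quad\text{in }\Omega^3_S.
\]
Here lies the only point requiring care: because $S$ is a domain, $h\neq0$, and $\Omega^3_S$ is free and therefore torsion-free, the relation $h\,(\omega\wedge d\omega)=0$ forces $\omega\wedge d\omega=0$, i.e.\ $\omega$ is integrable. This cancellation is the crux of the argument, and it is precisely the step where the ambient ring being an integral domain is used; everything else is formal. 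The containment $J(\omega)\subset I(\omega)$ in the integrable case has already been established in the first implication, completing the statement.
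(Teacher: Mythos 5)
Your proof is correct and follows essentially the same route as the paper's: contracting $\omega\wedge d\omega=0$ with a vector field gives $J(\omega)\subset I(\omega)$ (hence $I(\omega)\neq0$), and wedging $h\,d\omega=\omega\wedge\eta$ with $\omega$ gives integrability. Your explicit handling of the case $\omega=0$ and of the cancellation of $h$ via torsion-freeness of $\Omega^3_S$ merely spells out details the paper leaves implicit.
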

 \begin{proof}
  If $\omega$ is integrable then we can contract the equation $\omega\wedge d\omega=0$ by some vector field $X$ and we get
 \[
  i_X\omega \ d\omega = \omega\wedge i_Xd\omega,
 \]
showing that $0\neq J(\omega)\subset I(\omega)$. To see that the other implication holds, we must see that if $I(\omega)\neq 0$ then $\omega$ is integrable, this can be seen by multiplying by $\omega$ the equation
 \[
  h\ d\omega =\omega\wedge \eta
 \]
which shows that $\omega\wedge d\omega=0$.
 \end{proof}

We can state the following result, which gives an algebraic criterion for the integrability of $\omega$:

\begin{Unfold}\label{teo5}
 Let $\omega\in H^0(\Omega^1_{\PP^n}(e))$, then $J(\omega)\subset I(\omega) \iff \omega\wedge d\omega =0$.
\end{Unfold}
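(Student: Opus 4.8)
The plan is to obtain both implications directly from the Lemma immediately preceding the statement, which records two facts: that $I(\omega)\neq 0$ if and only if $\omega\wedge d\omega=0$, and that $\omega\wedge d\omega=0$ forces $J(\omega)\subseteq I(\omega)$. With these the theorem becomes essentially bookkeeping, so I would not reprove the Lemma but merely assemble its two halves.

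For the implication $\omega\wedge d\omega=0\Rightarrow J(\omega)\subseteq I(\omega)$ there is nothing new to do: it is exactly the second assertion of the Lemma. The underlying mechanism, which I would recall in one line, is that contracting $\omega\wedge d\omega=0$ with $\partial/\partial x_i$ yields $A_i\,d\omega=\omega\wedge i_{\partial/\partial x_i}(d\omega)$, exhibiting each generator $A_i$ of $J(\omega)$ as an element of $I(\omega)$.

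For the converse $J(\omega)\subseteq I(\omega)\Rightarrow\omega\wedge d\omega=0$ I would argue through the nonvanishing of $J(\omega)$. Writing $\omega=\sum_{i=0}^n A_i\,dx_i$, the ideal $J(\omega)$ is generated by the coefficients $A_i$; hence $J(\omega)\neq 0$ as soon as $\omega\neq 0$. Then $J(\omega)\subseteq I(\omega)$ forces $I(\omega)\neq 0$, and the first assertion of the Lemma gives $\omega\wedge d\omega=0$. The content of that assertion, again worth recalling, is that any $h\in I(\omega)$ satisfies $h\,d\omega=\omega\wedge\eta$, so multiplying by $\omega$ gives $h\,(\omega\wedge d\omega)=0$; since $\Omega^3_S$ is a free, hence torsion-free, $S$-module and $h\neq 0$, this forces $\omega\wedge d\omega=0$.

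The only delicate points, and the sole potential obstacle, are the verification that $J(\omega)\neq 0$ for $\omega\neq 0$ and the separate, trivial treatment of $\omega=0$ (for which both sides of the equivalence hold automatically); past these the statement is a formal consequence of the preceding Lemma and requires no genuine computation.
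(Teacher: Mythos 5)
Your proof is correct and takes essentially the same route as the paper: the paper's own proof is exactly the observation that $J(\omega)\neq 0$ always holds, after which both implications follow from the two assertions of the preceding Lemma. Your extra remarks (the torsion-freeness of $\Omega^3_S$ behind the Lemma's second half, and the trivial case $\omega=0$) only make explicit details the paper leaves implicit.
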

\begin{proof}
We just need to observe that we always have $0\neq J(\omega)$, then the result is immediate from the previous Lemma.
 \end{proof}

\bigskip

In this way, we have that the integrability condition on $\omega$, $\omega\wedge d\omega=0$, can be seen algebraically, also in terms of the short exact sequence
\[
 \xymatrix@R=10pt{
 0 \ar[r] & J(\omega) \ar[r] & I(\omega) \ar[r] & I(\omega)/J(\omega) \ar[r] & 0}
\]

This short exact sequence have a long sequence of cohomology associated to it given by, after sheafification, 
\begin{equation}\label{longexact1}
 \xymatrix@R=10pt@C=10pt{
 0 \ar[r] & H^0(\mathscr{J}(t)) \ar[r] & H^0(\II(t)) \ar[r] & H^0((\II/\mathscr{J})(t)) \ar[r]^-{\delta_1} & H^1(\mathscr{J}(t)) \ar[r] &\\ 
 \ar[r] & H^1(\II(t)) \ar[r] & H^1((\II/\mathscr{J})(t)) \ar[r]^-{\delta_2} & H^2(\mathscr{J}(t)) \ar[r] & H^2(\II(t)) \ar[r] & \ldots &  }
\end{equation}

Then, from Theorem \ref{teo4} we have:
\begin{corollary}
Let $\omega\in\FF(\PP^n,e)$ be as in Theorem \ref{teo4} then we have that $\delta_1$ is an epimorphism.
\end{corollary}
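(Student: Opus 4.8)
The plan is to read the surjectivity of $\delta_1$ straight off the long exact sequence (\ref{longexact1}), using only the vanishing established in Theorem \ref{teoI}. First I would note that the sequence (\ref{longexact1}) is legitimately available: since every $\omega\in\FF(\PP^n,e)$ is integrable, Remark \ref{1notinI} gives the inclusion $J(\omega)\subseteq I(\omega)$, so the short exact sequence $0\to J\to I\to I/J\to 0$ and its sheafification $0\to\mathscr{J}\to\II\to\II/\mathscr{J}\to 0$ make sense, and their cohomology sequence is exactly (\ref{longexact1}). Here $\II$ is, by definition, the sheafification of the unfolding ideal $I$ to which Theorem \ref{teoI} applies.

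Next I would isolate the three-term piece of (\ref{longexact1}) containing $\delta_1$, namely
\[
H^0((\II/\mathscr{J})(t)) \xrightarrow{\ \delta_1\ } H^1(\mathscr{J}(t)) \xrightarrow{\ \alpha\ } H^1(\II(t)),
\]
and use exactness, which gives $\operatorname{im}(\delta_1)=\ker(\alpha)$. Consequently $\delta_1$ is an epimorphism precisely when $\alpha$ is the zero map; in particular $\delta_1$ is surjective whenever the target $H^1(\II(t))$ vanishes. The only substantive input is then Theorem \ref{teoI}, which asserts $H^1_*(\II)=0$, i.e. $H^1(\II(t))=0$ for every $t\in\ZZ$. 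Feeding this in makes $\alpha$ vanish in every twist, so $\operatorname{im}(\delta_1)=\ker(\alpha)=H^1(\mathscr{J}(t))$ and $\delta_1$ is onto in each degree, as claimed.

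I do not expect any genuine obstacle: the statement is essentially a formal consequence of Theorem \ref{teoI} together with exactness of (\ref{longexact1}). In fact the hypotheses inherited from Theorem \ref{teo4} (that $J=\sqrt{J}$ and $\KK\cap\LL=\emptyset$) are not needed for this particular conclusion, since Theorem \ref{teoI} holds for \emph{all} $\omega\in\FF(\PP^n,e)$; they serve only to place the corollary in the context of the main theorem. The single point worth verifying carefully is simply that the $\II$ appearing in (\ref{longexact1}) is the same sheafified unfolding ideal for which $H^1_*(\II)=0$ was proved, which it is by construction.
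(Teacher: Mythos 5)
Your proposal is correct and takes essentially the same route as the paper: both arguments read the surjectivity of $\delta_1$ directly off the long exact sequence (\ref{longexact1}), using the vanishing $H^1_*(\II)=0$ supplied by Theorem \ref{teoI} to kill the map $H^1(\mathscr{J}(t))\to H^1(\II(t))$. Your side remark that the hypotheses of Theorem \ref{teo4} are not strictly needed is also accurate, since the paper's proof ultimately rests on that same vanishing; the hypotheses only serve to identify $\II$ with the sheafified Kupka ideal (so that one may write $H^1(\II(t))=H^1(\KK(t))=0$) and thereby phrase the conclusion in terms of the Kupka scheme.
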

\begin{proof}
The annihilation of $H^1(\II(t))=H^1(\KK(t))=0$ is given by the morphism $\delta_1$ being an epimorphism.
\end{proof}

\

The following result was obtained in dimension 3 in \cite[Theorem 3.10, p.~12]{Jardim-etal}. We present here a shorter and almost completely self contained proof that works for any dimension $n\geq3$.

\begin{FoAppl}\label{teo6}
Let us consider $\omega\in H^0(\Omega^1_{\PP^n}(e))$ such that $\sing(\omega)$ is a smooth variety and such that $\sing(\omega)=S_2(\omega)$. Then $\omega$ is defined by a degree 0 rational foliation or it is not integrable.
 \end{FoAppl}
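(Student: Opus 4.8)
The plan is to prove the integrable case of the dichotomy: assuming $\omega\in\FF(\PP^n,e)$, I will show that smoothness of $Z:=\sing(\omega)$ together with $\sing(\omega)=S_2(\omega)$ forces $\omega$ to be a pencil of hyperplanes, which is precisely a degree $0$ rational foliation. The immediate, routine reductions are: $Z$ smooth $\Rightarrow$ $Z$ reduced, hence $J=\sqrt{J}$; and $\sing(\omega)=S_2(\omega)$ is pure of codimension two, so $T_\omega$ is locally free by \cite{correa-etal,gir-pancoll}. Writing $\omega=\sum A_i\,dx_i$, the singular scheme is cut out by the $A_i$, and at a point $p\in Z$ the form $d\omega(p)$ is the antisymmetric part of the Jacobian $M=(\partial_j A_i)(p)$, while $T_pZ=\ker M$; smoothness in codimension two forces $\mathrm{rank}\,M=2$.

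The key step, which I expect to be the main obstacle, is to show that every singular point is of Kupka type, i.e. $\LL=\emptyset$ and $Z=\KK=\KK_{set}$. If $p\in Z$ were non-Kupka then $d\omega(p)=0$, so $M$ is symmetric of rank two and the transverse $1$-jet of $\omega$ at $p$ is exact, $\omega\equiv dQ$ for a nondegenerate transverse quadric $Q$. I would rule this out using the global structure: the Euler identities $i_R\omega=0$ and $i_Rd\omega=e\,\omega$ place $R(p)$ in the radical of $d\omega(p)$, and an exact transverse type carries a (Morse) holomorphic first integral along the whole component by Mattei--Moussu type rigidity; for a projective integrable form the resulting closed/first-integral structure is incompatible with $i_Rd\omega=e\,\omega$ having no zeros off $Z$. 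Equivalently, one shows that such a component would violate $\KK\cap\LL=\emptyset$, so that Theorem \ref{technical1} and the identification $\sqrt{I}=\sqrt{K}$ of Theorem \ref{teo1} could not hold. This is where the real work lies.

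Granting $Z=\KK=\KK_{set}$, the Kupka set is compact and Theorem \ref{complete-intersection} applies: $Z$ is a smooth complete intersection $\{f_1=f_2=0\}$ with $\deg f_i=a_i$. Moreover, at each (Kupka) point the local form $\omega=\varphi^*\eta$ with $\varphi=(f_1,f_2)$ gives $d\omega=c\,df_1\wedge df_2$, so $d\omega|_Z$ is a nowhere-vanishing section of $\Lambda^2 N_Z^\vee\otimes\OO_Z(e)=(\det N_Z)^{-1}(e)|_Z$, where $N_Z$ is the normal bundle; hence $\det N_Z=\OO_Z(e)$ and $a_1+a_2=e$.

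Finally I would pin down $a_1=a_2=1$ by a general linear section $\ell\colon\PP^3\hookrightarrow\PP^n$, as in the proof of Theorem \ref{complete-intersection}. Then $\ell^*\omega\in\FF(\PP^3,e)$ has smooth, entirely Kupka, pure--codimension--two singular curve $C=Z\cap\PP^3$, a complete intersection of type $(a_1,a_2)$ with $\deg C=a_1a_2$; in particular $J(\ell^*\omega)=\sqrt{J(\ell^*\omega)}$ and $\LL(\ell^*\omega)=\emptyset$, so $T_{\ell^*\omega}$ splits as $\OO(\alpha_1)\oplus\OO(\alpha_2)$ with $\alpha_i\le 1$, by Theorem \ref{SplitTan}. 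The Chern relations $c_1(T_{\ell^*\omega})=4-e=\alpha_1+\alpha_2$ and $\deg C=(e-2)^2+2-\alpha_1\alpha_2$ of \cite{Jardim-etal}, combined with $a_1+a_2=e$ and $\deg C=a_1a_2$, force $e=2$: one computes that the minimal value of $(e-2)^2+2-\alpha_1\alpha_2$ compatible with $\alpha_i\le 1$ exceeds the maximal $a_1a_2$ with $a_1+a_2=e$ by exactly $\tfrac12(e-2)^2$, which is positive for $e\ge 3$. Hence $e=2$, $a_1=a_2=1$, $Z=\PP^{n-2}$, and $\omega=f_1\,df_2-f_2\,df_1$ is the pencil of hyperplanes through $Z$: a degree $0$ rational foliation. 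Geometrically the same conclusion reads that for $\max(a_i)\ge 2$ the projective logarithmic form $\omega=a_1f_1\,df_2-a_2f_2\,df_1$ acquires singular points off the smooth complete intersection $Z$, contradicting $\sing(\omega)=S_2(\omega)$. The Kupka--everywhere lemma of the second paragraph remains the crux.
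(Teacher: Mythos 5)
Your overall architecture in the integrable case (reduce to ``every singular point is Kupka'', then invoke Theorem \ref{complete-intersection}, then force degree $0$) matches the paper's, but the step you yourself flag as the crux --- $\LL=\emptyset$ and $\KK=\KK_{set}=\sing(\omega)$ --- is a genuine gap, and the route you sketch for it cannot be completed, because the statement is irreducibly \emph{global}. There exist integrable germs whose singular locus is smooth, reduced, of pure codimension two, and consists \emph{entirely} of non-Kupka points: for instance
\[
\omega \;=\; y\,dx+x\,dy+xy\,dz \;=\; e^{-z}\,d\bigl(xy\,e^{z}\bigr),
\]
which has $\sing(\omega)=\{x=y=0\}$ smooth and reduced, while $d\omega=(y\,dx+x\,dy)\wedge dz$ vanishes identically along it. This germ exhibits exactly the features your local analysis isolates --- symmetric rank-two Jacobian, exact nondegenerate transverse $1$-jet $d(xy)$ --- and it even carries the Morse-transversal first integral ($xy\,e^{z}$) that a Mattei--Moussu-type argument would produce, with no contradiction in sight. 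So nothing local, nor ``rigidity of the first-integral structure along the component'', can rule this out; and the appeal to $i_R d\omega=e\,\omega$ ``having no zeros off $Z$'' is not an argument, since that identity holds identically for every projective $1$-form. Likewise, ``such a component would violate $\KK\cap\LL=\emptyset$'' is circular: $\KK\cap\LL=\emptyset$ is precisely the hypothesis one must verify, not a fact granted by Theorems \ref{teo1} or \ref{technical1}. The paper closes this hole with a global input you did not have: by \cite[Theorem 4.24, p.~18]{mmq}, an integrable projective form whose singular scheme is reduced has $\KK\neq\emptyset$; combined with the openness of $\KK_{set}$ in $\sing(\omega)$ and the irreducibility implicit in ``smooth variety'', this yields $\KK=\sing(\omega)=S_2(\omega)$, which is what Theorem \ref{complete-intersection} requires. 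Without that theorem (or an independent proof of it), your argument does not go through.

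Granting that step, your ending is correct but genuinely different from the paper's, and worth noting. The paper quotes Cerveau--Lins Neto \cite[1.3.4 Theorem A]{cerveau-linsneto} to conclude that a foliation whose Kupka set is a complete intersection is rational of type $(r,s)$, i.e.\ $\omega=rf\,dg-sg\,df$, and then \cite[Theorem 3(i)]{fmi} to get that $\sing(\omega)=S_2(\omega)$ forces $r=s=1$. You instead restrict to a generic $\PP^3$, split $T_{\ell^{*}\omega}=\OO_{\PP^3}(\alpha_1)\oplus\OO_{\PP^3}(\alpha_2)$ via Theorem \ref{SplitTan}, and play the relations $\alpha_1+\alpha_2=4-e$, $\deg C=(e-2)^2+2-\alpha_1\alpha_2$ from \cite{Jardim-etal} against $\deg C=a_1a_2$, $a_1+a_2=e$; the excess is $\tfrac12(e-2)^2$ only for $e$ even and $\tfrac12\bigl((e-2)^2+1\bigr)$ for $e$ odd, but it is positive for all $e\geq 3$ either way, forcing $e=2$ and the pencil of hyperplanes. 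This is a nice, more self-contained alternative to the paper's two citations --- but it does not repair the gap above, which is where the actual content of the theorem lies.
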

\begin{proof}
 Let us suppose that $\omega$ it is integrable. Since $\sing(\omega)$ is smooth variety, then it is reduced, and then by \cite[Theorem 4.24, p.~18]{mmq} we know that $\KK\neq\emptyset$, which implies $\KK=\sing(\omega)=S_2(\omega)$. Now by Corollary \ref{complete-intersection} we know that $\KK$ is complete intersection, and thus, by \cite[1.3.4 Theorem A, p.~99]{cerveau-linsneto} it is defined by a rational foliation of the type $(r,s)$, \emph{i.e.}, $\omega$ of type $\omega= r f dg - s g df$ for $f$ and $g$ polynomials of degree $r$ and $s$ respectively with $r+s=e$. Now, by \cite[Theorem 3, (i), p.~133]{fmi}, we know that $\sing(\omega)=S_2(\omega)$ if and only if $r=s=1$, what concludes our proof.
\end{proof}


\

\begin{tabular}{l l}
Omegar Calvo--Andrade$^*$ \hspace{3cm}\null&\textsf{jose.calvo@cimat.mx}\\
Ariel Molinuevo$^*$  &\textsf{ariel.molinuevo@cimat.mx}\\
Federico Quallbrunn$^\dagger$  &\textsf{fquallb@dm.uba.ar}\\
&\\
$^*$\textsc{Centro de investigaci\'on } & \\
\textsc{en Matem\'aticas, A.C.}&\\
\textsc{Callejon : Jalisco s/n} &\\
\textsc{Colonia : Valenciana} &\\
\textsc{CP 36240} &\\
\textsc{Guanajuato, Gto.}&\\
\textsc{M\'exico}&\\
&   \\
$^\dagger$\textsc{Departamento de Matem\'atica}&\\
\textsc{Pabell\'on I}&\\
\textsc{Ciudad Universitaria}&\\
\textsc{CP C1428EGA}&\\
\textsc{Buenos Aires}&\\
\textsc{Argentina}&\\
\end{tabular}

\end{document}